\documentclass[a4paper,12pt]{amsart}
\pdfoutput=1
\usepackage{setspace}
\setstretch{1.1}
\usepackage[left=35mm,head=30mm,bottom=30mm,foot=20mm,right=35mm]{geometry}
\usepackage{amsmath,amsthm,amsfonts,amssymb,mathtools,mathrsfs,url,bm,enumitem,stackengine}
\usepackage{newtxtext}
\usepackage[varvw]{newtxmath}
\allowdisplaybreaks
\usepackage{hyperref}
\hypersetup{
  colorlinks=true,
  linkcolor=blue,
  citecolor=red,
}
\mathtoolsset{showonlyrefs=true}
\theoremstyle{definition}
  
  \newtheorem{rem}{Remark}[section]

\theoremstyle{plain}
  \newtheorem{thm}{Theorem}[section]
  
  \newtheorem{prop}{Proposition}[section]
  \newtheorem{lem}{Lemma}[section]
\renewcommand{\Re}{\operatorname{Re}}

\renewcommand{\phi}{\varphi}
\newcommand\restr[2]{{
  \left.\kern-\nulldelimiterspace 
  #1 
  \vphantom{\big|} 
  \right|_{#2} 
}}

\let\phi\varphi
\let\epsilon\varepsilon

\DeclareMathOperator{\dist}{dist}

\newcommand{\relmiddle}[1]{\mathrel{}\middle#1\mathrel{}}
\DeclareMathOperator{\spn}{span}

\usepackage{tikz-cd}
\tikzset{
  symbol/.style={
    draw=none,
    every to/.append style={
      edge node={node [sloped, allow upside down, auto=false]{$#1$}}}
  }
}

\begin{document}

\begin{abstract}
    We consider a viscous compressible barotropic flow in the interval $[0,\pi]$ with homogeneous Dirichlet boundary conditions for the flow velocity and a constant rest state as initial data. Given two sufficiently close subintervals $I=[\alpha_1,\alpha_2]$ and $J=[\beta_1,\beta_2]$ of $(0,1)$, a nonempty open set $\omega \subset (1,\pi)$, and $T>0$, we construct an external force $f$ supported in $\omega$ acting on the momentum equation such that the corresponding flow map moves the fluid particles initially occupying $I$ exactly onto $J$ in time $T$.
\end{abstract}

\title[Local exact Lagrangian controllability]{Local exact Lagrangian controllability for 1D barotropic compressible Navier--Stokes equations}

\author{Kai Koike}
\address{Department of Mathematics, Institute of Science Tokyo, 2-12-1, Ookayama, Meguro-ku, Tokyo 152-8551, Japan}
\email{koike.k@math.titech.ac.jp}

\author{Franck Sueur}
\address{Department of Mathematics, 
Maison du nombre, 6 avenue de la Fonte, 
University of Luxembourg,  
L-4364 Esch-sur-Alzette, Luxembourg}
\email{franck.sueur@uni.lu}

\author{Gast\'{o}n Vergara-Hermosilla}
\address{Institute for Theoretical Sciences, Westlake University, 600 Dunyu Road, 310030, Hangzhou, Zhejiang, People’s Republic of China
}
\email{gvh@westlake.edu.cn}


\maketitle

\section{Introduction}
Let a viscous compressible fluid occupy the interval $[0,\pi]$. Denote by $\rho=\rho(t,x)$ and $u=u(t,x)$ the density and the velocity of the fluid, respectively. We assume that the flow is barotropic, that is, the pressure $p=p(\rho)$ is a function only of $\rho$. Moreover, we assume a physically natural condition that $p$ is smooth around $\rho=1$ and that $p'(1)>0$. Assuming also that the fluid is initially at rest and cannot escape from both ends of $[0,\pi]$, the time evolution of $(\rho,u)$ can be described by the following barotropic compressible Navier--Stokes equations:
\begin{equation}
  \label{eq:model}
  \begin{dcases}
    \rho_t+(\rho u)_x=0 & \text{in $(0,T)\times (0,\pi)$}, \\
    \rho(u_t+uu_x)+p(\rho)_x=u_{xx}+f & \text{in $(0,T)\times (0,\pi)$}, \\
    u(t,0)=0, \quad u(t,\pi)=0 & \text{for $t\in (0,T)$}, \\
    (\rho,u)(0,x)=(1,0) & \text{for $x\in (0,\pi)$}.
  \end{dcases}
\end{equation}
Here $T$ is a positive real number and the viscous coefficient in front of $u_{xx}$ is set to unity for simplicity. More importantly, $f\in L^2((0,T)\times (0,\pi))$ is an external force whose support in $x$ is included in a nonempty open set $\omega \subset (1,\pi)$.

Associated to the solution $(\rho,u)$ to~\eqref{eq:model}, which is determined by $f$, we define the flow map
\begin{equation}
    \phi=\phi[f]\colon [0,T]\times [0,\pi]\to [0,\pi]
\end{equation}
by $\phi(0,x)=x$ and $\phi_t(t,x)=u(t,\phi(t,x))$. That this ODE is uniquely solvable (in its integral form) is guaranteed by the regularity $u\in L^1(0,T;\, \mathrm{Lip}([0,\pi]))$ obtained from Proposition~\ref{prop:wellposedness} below,\footnote{Here $\mathrm{Lip}([0,\pi])$ is the Banach space of Lipschitz continuous functions on $[0,\pi]$ endowed with the norm $\| f \|_{\mathrm{Lip}([0,\pi])}=\sup_{x\in [0,\pi]}|f(x)|+\sup_{x,y\in [0,\pi],\, x\neq y}|f(x)-f(y)|/|x-y|$.} which can be proved similarly to~\cite[Theorem~7.1]{MN80}; note however that we only consider smooth $f$ in this paper, so that $u$ is actually smooth.

\begin{prop}
    \label{prop:wellposedness}
    For $f\in L^2((0,T)\times (0,\pi))$, there exists $\epsilon_0>0$ such that if $\epsilon \coloneqq \| f \|_{L^2((0,T)\times (0,\pi))}\leq \epsilon_0$, then~\eqref{eq:model} has a unique strong solution in the class
    \begin{gather}
        \rho \in C([0,T];\, H^1(0,\pi))\cap C^1([0,T];\, L^2(0,\pi)), \\
        u\in C([0,T];\, H_{0}^{1}(0,\pi))\cap L^2(0,T;\, H^2(0,\pi)), \quad u_t \in L^2(0,T;\, L^2(0,\pi)).
    \end{gather}
    Moreover, there exists a positive constant $C>0$ such that
    \begin{equation}
        \sup_{0\leq t\leq T}\left( \| \rho(t,\cdot)-1 \|_{H^1(0,\pi)}+\| u(t,\cdot) \|_{H^1(0,\pi)} \right)+\left( \int_{0}^{T}\| u_x(t,\cdot) \|_{H^1(0,\pi)}^{2}\, dt \right)^{1/2}\leq C\epsilon.
    \end{equation}
\end{prop}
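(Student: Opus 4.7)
The plan is to view~\eqref{eq:model} as a perturbation of its linearization at the rest state $(\rho,u)=(1,0)$ and to construct the solution by Banach fixed point on an energy space. Writing $\sigma:=\rho-1$ and $a:=p'(1)>0$, the system becomes
\begin{equation*}
\begin{dcases}
\sigma_t+u_x=-(\sigma u)_x, \\
u_t+a\sigma_x-u_{xx}=f+N(\sigma,u),
\end{dcases}
\end{equation*}
where $N(\sigma,u)$ collects the quadratic-and-higher nonlinearities arising from $(\rho-1)u_t$, $\rho u u_x$, and $(p'(1+\sigma)-p'(1))\sigma_x$, together with the Dirichlet condition on $u$ and zero initial data for both $\sigma$ and $u$.

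First I would establish sharp a priori estimates for the linearized inhomogeneous problem (with the right-hand sides treated as given sources) on the class
\[
X_T:=\bigl\{\sigma\in C([0,T];H^1),\; u\in C([0,T];H_0^1)\cap L^2(0,T;H^2),\; u_t\in L^2(0,T;L^2)\bigr\}.
\]
Testing the continuity equation by $a\sigma$ and the momentum equation by $u$ and summing yields the basic identity
\begin{equation*}
\tfrac{1}{2}\tfrac{d}{dt}\int_0^\pi (u^2+a\sigma^2)\,dx + \int_0^\pi u_x^2\,dx = \int_0^\pi f u\,dx + \text{(source and nonlinear terms)}.
\end{equation*}
Bounds on $u_t$ and $u_{xt}$ in $L^2_tL^2$ follow from differentiating the momentum equation in $t$ and testing by $u_t$, using the compatibility $u_t(0,\cdot)=0$ inherited from the rest initial data. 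The pointwise identity $u_{xx}=u_t+a\sigma_x-f-N$ then upgrades $u$ to $L^2_tH^2$ once $\sigma_x\in L^2_tL^2$ is under control. For the $H^1$ bound on $\sigma$, I would differentiate the mass equation in $x$, test by $\sigma_x$, and absorb the top-order term $\int u_{xx}\sigma_x$ into the parabolic estimate just obtained, closing all bounds via Grönwall.

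With the linear theory in hand, I would run the fixed-point map $\Phi\colon X_T\to X_T$ sending $(\tilde{\sigma},\tilde{u})$ in the closed ball $B_{K\epsilon}^{X_T}$ to the solution of the linearized system with source $\bigl(-(\tilde{\sigma}\tilde{u})_x,\,f+N(\tilde{\sigma},\tilde{u})\bigr)$, starting from $(0,0)$. The one-dimensional embedding $H^1(0,\pi)\hookrightarrow L^\infty(0,\pi)$ makes every term of the source at least quadratic in $(\tilde{\sigma},\tilde{u})$, so for $\epsilon\leq\epsilon_0$ small enough $\Phi$ stabilises $B_{K\epsilon}^{X_T}$ and is a contraction, producing the unique solution with the claimed linear-in-$\epsilon$ estimate.

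I expect the main obstacle to be handling the hyperbolic density equation: since $\sigma$ enjoys no parabolic smoothing of its own, the $H^1$ bound on $\sigma$ must be extracted indirectly through the momentum equation, in the spirit of Matsumura--Nishida, rather than by naive differentiation of the transport equation. Balancing this coupling correctly (in particular, closing the $\int u_{xx}\sigma_x$ term without losing regularity) is the genuine technical point; once it is done, the rest follows standard compressible fluid arguments.
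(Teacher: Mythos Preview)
Your overall strategy is exactly what the paper has in mind: the paper does not prove this proposition at all but simply refers to~\cite[Theorem~7.1]{MN80}, and your sketch is a faithful outline of the Matsumura--Nishida argument adapted to the bounded interval with zero data. So in spirit you are aligned with the paper's ``proof''.

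There is, however, one concrete point where your iteration as written would not close. You propose to send $(\tilde\sigma,\tilde u)\in X_T$ to the solution of the \emph{fully} linearized system with source $g=-(\tilde\sigma\tilde u)_x$ in the mass equation, and then recover $\sigma\in C_tH^1$ by differentiating in $x$ and testing by $\sigma_x$. That step needs $g_x=-(\tilde\sigma\tilde u)_{xx}\in L^1_tL^2$, but $(\tilde\sigma\tilde u)_{xx}$ contains the term $\tilde\sigma_{xx}\tilde u$, and in your space $X_T$ you only have $\tilde\sigma\in C_tH^1$, so $\tilde\sigma_{xx}$ is merely $C_tH^{-1}$. The integration-by-parts trick $\int u\,\sigma_{xx}\sigma_x=-\tfrac12\int u_x\sigma_x^2$ that saves this term in the \emph{a priori} estimate only works because there $\sigma$ and $\tilde\sigma$ coincide; once you freeze $\tilde\sigma$ as an external datum the cancellation is lost and the map $\Phi$ does not land in $X_T$. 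The standard remedy---and this is precisely the Matsumura--Nishida point you allude to at the end---is to keep the transport structure in the iteration: given $\tilde u$, solve $\sigma_t+\tilde u\,\sigma_x+(1+\sigma)\tilde u_x=0$ (a genuine transport equation along the flow of $\tilde u$, which propagates $H^1$ regularity since $\tilde u\in L^2_tH^2\hookrightarrow L^1_t\mathrm{Lip}$), and only then solve the parabolic momentum equation. With that modification your scheme closes and the rest of your outline goes through.
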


\subsection{Local exact Lagrangian controllability}
The purpose of this paper is to investigate the \textit{local exact Lagrangian controllability} for the compressible Navier–Stokes equations~\eqref{eq:model}. That is, we ask whether one can choose an appropriate external force $f$ supported in $\omega \subset (1,\pi)$ so that the flow map $\phi=\phi[f]$ moves the fluid particles initially occupying an interval $I=[\alpha_1,\alpha_2]\subset (0,1)$ exactly onto another interval $J=[\beta_1,\beta_2]\subset (0,1)$ in a given time $T>0$; see Figure~\ref{fig:01}.

\begin{figure}[htbp]
    \centering
    \includegraphics[scale=0.85]{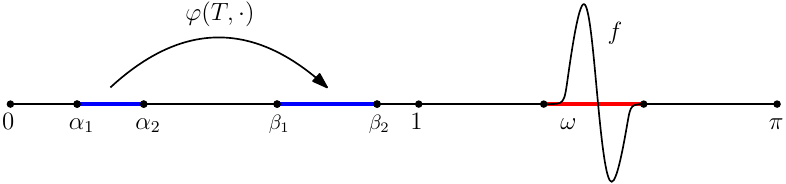}
    \caption{Can we find a remote control $f=f(t,x)$ whose support in $x$ is contained in $\omega$ such that the corresponding flow map $\phi=\phi[f]$ moves the interval $I=[\alpha_1,\alpha_2]$ exactly onto $J=[\beta_1,\beta_2]$ in a given time $T>0$?}
    \label{fig:01}
\end{figure}

Our main result, Theorem~\ref{thm:local_exact_Lagrangian_controllability} below, answers this question affirmatively when $I$ and $J$ are sufficiently close. In fact, we can achieve this using a smooth control of the form $f=\epsilon_1 f_1+\epsilon_2 f_2$ (where $\epsilon_1,\epsilon_2 \in \mathbb{R}$) with $f_1$ and $f_2$ defined by
\begin{equation}
    \label{eq:fi}
    f_i(t,x)=\chi_{\eta}(x)\xi_i(t,x) \quad (i=1,2).
\end{equation}
Here $\chi_{\eta}$ ($\eta>0$) is a non-negative smooth function supported in $\omega$ such that\footnote{Instead of simply setting $f_i(t,x)=\bm{1}_{\omega}(x)\xi_i(t,x)$ (where $\bm{1}_{\omega}$ is the indicator function of $\omega$), we multiplied the cutoff function $\chi_{\eta}$ to $\xi_i$. This makes $f_i$ smooth on $(0,T)\times (0,\pi)$ (see Lemma~\ref{lem:smoothness}), which we use to ensure that the map $\Theta$ in the proof is $C^1$; see the first paragraph of Section~\ref{sec:proof_Jac}.}
\begin{equation}
    \chi_{\eta}(x)=1 \quad (\dist(x,\omega^c)\geq \eta)
\end{equation}
and $(\zeta_i,\xi_i)$ is the solution to the linearized adjoint system 
\begin{equation}
  \label{eq:model_linearized_adjoint}
  \begin{dcases}
    -\partial_t \zeta_i-\partial_x \xi_i=0 & \text{in $(0,T)\times (0,\pi)$}, \\
    -\partial_t \xi_i-c^2\partial_x \zeta_i=\partial_{xx}\xi_i+\delta_{\alpha_i} & \text{in $(0,T)\times (0,\pi)$}, \\
    \xi_i(t,0)=0, \quad \xi_i(t,\pi)=0 & \text{for $t\in (0,T)$}, \\
    (\zeta_i,\xi_i)(T,x)=(0,0) & \text{for $x\in (0,\pi)$}
  \end{dcases}
\end{equation}
for $i=1,2$; here $c=\sqrt{p'(1)}>0$ and $\delta_{\alpha_i}$ is the Dirac delta function centered at $x=\alpha_i$. We note that this definition is inspired by the proof of~\cite[Theorem~3.1]{Horsin08}, which deals with an analogous problem for viscous Burgers' equation. The motivation behind the definition of $f_i$ becomes clear in the proof of Lemma~\ref{lem:J_is_Gram_det}.

The following properties of $\xi_1$ and $\xi_2$ proved in Section~\ref{sec:proof_smooth_independence} are crucial in our study of the Lagrangian controllability.

\begin{prop}
    \label{prop:smooth_independence}
    Let $\omega \subset (1,\pi)$ be a nonempty open set and $0<\alpha_1<\alpha_2<1$. Then $\xi_1$ and $\xi_2$ defined through~\eqref{eq:model_linearized_adjoint} are bounded and smooth on $(0,T)\times (1,\pi)$. In particular, $f_1$ and $f_2$ are bounded and smooth on $(0,T)\times (0,\pi)$. Moreover, $\xi_1$ and $\xi_2$ are linearly independent in $L^2((0,T)\times \omega)$.
\end{prop}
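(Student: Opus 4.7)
The plan is to convert~\eqref{eq:model_linearized_adjoint} to forward time via $\tau=T-t$, which turns it into an initial-value problem with vanishing initial data and the time-independent Dirac source $\delta_{\alpha_i}$ in the momentum equation. Since $\delta_{\alpha_i}\in H^{-s}(0,\pi)$ for every $s>\tfrac12$, a standard Galerkin or $C_0$-semigroup construction provides a unique weak solution with $\zeta_i\in L^\infty(0,T;L^2)$ and $\xi_i\in L^\infty(0,T;L^2)\cap L^2(0,T;H_0^1)$. The basic energy identity, obtained by testing the first equation against $c^2\zeta_i$ and the second against $\xi_i$ and summing, reads
\[
\frac{d}{d\tau}\frac12\int_0^\pi (c^2\zeta_i^2+\xi_i^2)\,dx+\int_0^\pi \xi_{i,x}^2\,dx=\xi_i(\tau,\alpha_i),
\]
which one closes using the 1D embedding $H_0^1(0,\pi)\hookrightarrow L^\infty(0,\pi)$ and Gronwall's lemma.

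For the smoothness claim, since $\alpha_i<1$, the source vanishes on $(0,T)\times(1,\pi)$. I would differentiate the momentum equation in $\tau$ and substitute $\partial_\tau\zeta_i=\partial_x\xi_i$ to eliminate $\zeta_i$ and simultaneously the time-independent source, obtaining the Kelvin--Voigt damped wave equation
\[
\partial_\tau^2\xi_i-c^2\partial_x^2\xi_i-\partial_\tau\partial_x^2\xi_i=0 \qquad \text{in }\mathcal{D}'((0,T)\times(0,\pi)).
\]
Its symbol $-\omega^2+c^2k^2+i\omega k^2$ does not vanish on $\mathbb{R}^2\setminus\{0\}$ and a direct check verifies H\"ormander's hypoellipticity criterion, yielding $\xi_i\in C^\infty((0,T)\times((0,\pi)\setminus\{\alpha_i\}))$, hence in particular on $(0,T)\times(1,\pi)$. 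Smoothness of $\zeta_i$ there follows from the representation $\zeta_i(\tau,x)=\int_0^\tau\partial_x\xi_i(s,x)\,ds$. Boundedness on the open set $(0,T)\times(1,\pi)$ is read off from interior smoothness together with the energy bound, continuity up to $\tau\in\{0,T\}$, and the Dirichlet condition at $x=\pi$. The claims on $f_i=\chi_\eta\xi_i$ then follow immediately, $\chi_\eta$ being smooth and compactly supported in $\omega\subset(1,\pi)$.

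For the linear independence, suppose by contradiction that $a_1\xi_1+a_2\xi_2\equiv 0$ in $L^2((0,T)\times\omega)$ with $(a_1,a_2)\ne(0,0)$. Setting $(\zeta,\xi):=a_1(\zeta_1,\xi_1)+a_2(\zeta_2,\xi_2)$, this pair solves~\eqref{eq:model_linearized_adjoint} with source $a_1\delta_{\alpha_1}+a_2\delta_{\alpha_2}$ and, by smoothness, $\xi\equiv 0$ pointwise on $(0,T)\times\omega$. The hypoelliptic equation displayed above continues to hold for $\xi$ on the whole of $(0,T)\times(0,\pi)$; its principal symbol $i\omega k^2$ vanishes only along the coordinate axes, so every hypersurface transverse to both axes is non-characteristic, and Holmgren's uniqueness theorem (or, alternatively, a Carleman estimate tailored to the damped wave operator) propagates $\xi\equiv 0$ from $(0,T)\times\omega$ to the whole connected open set $(0,T)\times(0,\pi)$. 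Then $\partial_\tau\zeta=\partial_x\xi=0$ combined with $\zeta(0,\cdot)=0$ forces $\zeta\equiv 0$, and substituting back yields $a_1\delta_{\alpha_1}+a_2\delta_{\alpha_2}=0$ in $\mathcal{D}'(0,\pi)$; since $\alpha_1\ne\alpha_2$, this forces $a_1=a_2=0$, a contradiction. The principal obstacle I anticipate is precisely this unique continuation step, since the damped wave operator is neither elliptic nor strictly parabolic: the coordinate hyperplanes are characteristic, so one is obliged to use genuinely "diagonal" surfaces, and the argument must exploit the hypoelliptic/analytic structure carefully.
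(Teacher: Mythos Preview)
Your smoothness argument has a genuine gap: the Kelvin--Voigt operator $L=\partial_\tau^2-c^2\partial_x^2-\partial_\tau\partial_x^2$ is \emph{not} hypoelliptic. A direct check of H\"ormander's criterion fails: with symbol $P(\omega,k)=-\omega^2+c^2k^2+i\omega k^2$, along the ray $\omega=\omega_0\neq0$, $k\to\infty$ one has $|P|\sim\sqrt{c^4+\omega_0^2}\,k^2$ while $|\partial_\omega P|\sim k^2$, so $|\partial_\omega P|/|P|\to (c^4+\omega_0^2)^{-1/2}\neq0$. Equivalently, in the sine expansion the characteristic roots $\lambda_n,\mu_n$ of $\mu^2+n^2\mu+c^2n^2=0$ satisfy $\lambda_n\sim-n^2$ but $\mu_n\to-c^2$, so the ``slow'' branch carries no spatial smoothing; there are $L^2$ solutions of $Lu=0$ that never become $C^\infty$ in $x$. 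Consequently the smoothness of $\xi_i$ away from $\alpha_i$ is \emph{not} interior regularity for $L$ but a feature of this particular solution. The paper proves it via the explicit Fourier formula: the dangerous part is $\sum_{n}\frac{\sin n\alpha_i}{\mu_n-\lambda_n}e^{\mu_n(T-t)}\sin nx$, and one expands $(\mu_n-\lambda_n)^{-1}e^{\mu_n(T-t)}$ to arbitrary order in $n^{-2}$, recognising each truncation as (a derivative of) the classical series $\sum n^{-2}\cos nx$, which is piecewise polynomial and hence smooth off $x\in\{\pm\alpha_i\}$. Your boundedness claim likewise does not follow from the ingredients you list once hypoellipticity is gone.

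Your unique continuation step is also shaky for the reason you anticipate: $\{x=\mathrm{const}\}$ is characteristic for $L$, and this is precisely the direction in which you must propagate from $\omega$ to $(0,\alpha_2]$. A Holmgren--John deformation using oblique surfaces is conceivable, but you have not supplied it, and the finite time window obstructs a naive sweep; invoking ``a Carleman estimate tailored to the damped wave operator'' is not a proof. The paper avoids this entirely: it evaluates the explicit series at $x=\gamma\pi$ with $\gamma$ irrational (so $\sin n\gamma\pi\neq0$), then uses a Rosier--Rouchon--type lemma on linear independence of the exponentials $e^{\lambda_n t},e^{\mu_n t}$ to deduce $c_1\sin n\alpha_1+c_2\sin n\alpha_2=0$ for every $n$, and finally a Vandermonde-type determinant identity for $(\sin i\alpha_j)$ to conclude $c_1=c_2=0$. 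If you wish to salvage your route, you must either prove a genuine UCP across the characteristic $x$-hyperplanes for $L$, or revert to the spectral argument.
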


In what follows, we fix $\eta>0$ small so that $\chi_{\eta}^{1/2}\xi_1$ and $\chi_{\eta}^{1/2}\xi_2$ are also linearly independent in $L^2((0,T)\times \omega)$. With these preparations, our main theorem reads:

\begin{thm}
  \label{thm:local_exact_Lagrangian_controllability}
  Let $T>0$ and $\omega \subset (1,\pi)$ be a nonempty open set. Then for any $0<\alpha_1<\alpha_2<1$, there exists $\delta=\delta(\alpha_1,\alpha_2,T,\omega)>0$ and a neighborhood $U$ of $(0,0)$ such that if $0<\beta_1<\beta_2<1$ satisfies $\max_{i=1,2}(|\alpha_i-\beta_i|)<\delta$, there exists a unique pair of real numbers $(\epsilon_1,\epsilon_2)$ in $U$ satisfying $\phi[\epsilon_1 f_1+\epsilon_2 f_2](T,\alpha_i)=\beta_i$ for $i=1,2$.
\end{thm}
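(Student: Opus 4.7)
The plan is to apply the inverse function theorem to the map
\begin{equation}
F(\epsilon_1,\epsilon_2) \coloneqq \bigl(\phi[\epsilon_1 f_1+\epsilon_2 f_2](T,\alpha_1),\; \phi[\epsilon_1 f_1+\epsilon_2 f_2](T,\alpha_2)\bigr),
\end{equation}
defined on a small neighborhood of $0 \in \mathbb{R}^2$. For $(\epsilon_1,\epsilon_2)$ small enough, Proposition~\ref{prop:wellposedness} furnishes a unique strong solution whose velocity lies in $L^1(0,T;\mathrm{Lip}(0,\pi))$, so the flow map is well defined. At the origin, the trivial solution $(\rho,u) \equiv (1,0)$ gives $\phi \equiv \mathrm{id}$ and hence $F(0,0) = (\alpha_1,\alpha_2)$.

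The key computation is the Jacobian $DF(0,0)$. Differentiating the characteristic ODE $\phi_t = u(t,\phi)$ with respect to $\epsilon_j$ at the origin, where $\phi = \mathrm{id}$ and $u \equiv 0$, yields
\begin{equation}
\partial_{\epsilon_j}\phi[\epsilon_1 f_1+\epsilon_2 f_2](T,\alpha_i)\big|_{\epsilon=0} = \int_0^T \tilde u_j(t,\alpha_i)\,dt,
\end{equation}
where $(\tilde\rho_j,\tilde u_j)$ solves the system linearized about $(1,0)$ with source $f_j$ in the momentum equation and zero initial data. Multiplying the linearized continuity equation by $c^2 \zeta_i$ and the linearized momentum equation by $\xi_i$, integrating by parts over $(0,T)\times(0,\pi)$ and using the Dirichlet, initial and terminal data of the forward/adjoint problems, the bulk terms reorganize into exactly~\eqref{eq:model_linearized_adjoint}, leaving
\begin{equation}
\int_0^T \tilde u_j(t,\alpha_i)\,dt = \int_0^T\!\!\int_0^\pi f_j\,\xi_i \,dx\,dt = \int_0^T\!\!\int_\omega \chi_\eta\, \xi_i\,\xi_j \,dx\,dt.
\end{equation}
Thus $DF(0,0)$ is the Gram matrix of $\{\chi_\eta^{1/2}\xi_1,\chi_\eta^{1/2}\xi_2\}$ in $L^2((0,T)\times\omega)$, which is the content of Lemma~\ref{lem:J_is_Gram_det}. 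Since $\eta$ was chosen so that these two functions are linearly independent, the Gram determinant is strictly positive and $DF(0,0)$ is invertible.

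A parametrized energy argument based on Proposition~\ref{prop:wellposedness} shows that $(\epsilon_1,\epsilon_2) \mapsto (\rho,u)$, and hence $F$, is of class $C^1$ on a neighborhood of the origin. The inverse function theorem then produces a neighborhood $U$ of $0 \in \mathbb{R}^2$ and some $\delta > 0$ such that $F$ restricts to a $C^1$ diffeomorphism from $U$ onto an open set containing $\{(\beta_1,\beta_2) : \max_i |\alpha_i-\beta_i| < \delta\}$; inverting $F$ delivers the unique pair $(\epsilon_1,\epsilon_2) \in U$ required by the theorem.

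The main technical obstacle is precisely the $C^1$ regularity of $F$: differentiating the quasilinear coupled system~\eqref{eq:model} in the parameter leads to a linearized problem with bounded coefficients, and one must show that its solution can be controlled in a topology strong enough to give continuity of $\int_0^T u_x(t,\phi(t,\alpha_i))\,dt$ and pointwise evaluation of the resulting perturbation of $\phi$. All the ingredients for this are, however, already contained in the proof of Proposition~\ref{prop:wellposedness}, so the argument should be a routine parameter-dependent version of that estimate.
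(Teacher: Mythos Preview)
Your proposal is correct and follows essentially the same approach as the paper: define the map $F=\Theta$, identify its Jacobian at the origin with the Gram matrix of $\chi_\eta^{1/2}\xi_1,\chi_\eta^{1/2}\xi_2$ via linearization and duality (the paper's Lemmas~\ref{lem:derivative_in_terms_of_linearized_equations} and~\ref{lem:J_is_Gram_det}), invoke the linear independence from Proposition~\ref{prop:smooth_independence} to get a nonzero determinant, and conclude by the inverse function theorem. The only difference is presentational---the paper spells out the $O(\epsilon_i^2)$ estimates behind your formula for $\partial_{\epsilon_j}\phi$ in detail, whereas you sketch them and instead comment more on the $C^1$ dependence issue, which the paper dismisses in a single sentence.
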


Since the flow map preserves order, that is, $\phi(T,x_1)<\phi(T,x_2)$ if $0\leq x_1<x_2 \leq \pi$, the mapping of the end points $\phi(T,\alpha_i)=\beta_i$ ($i=1,2$) implies the transportation of the whole interval $\phi(T,I)=J$. Thus our main theorem shows that a remote action $f$ can move the fluid particles initially occupying the interval $I=[\alpha_1,\alpha_2]$ exactly onto $J=[\beta_1,\beta_2$] if both are close enough.

\begin{rem}
    \label{rem:several_intervals}
    We can generalize Theorem~\ref{thm:local_exact_Lagrangian_controllability} to more than two points: Let $0<\alpha_1<\alpha_2<\cdots<\alpha_d<1$ and $0<\beta_1<\beta_2<\cdots<\beta_d<1$ with $d\geq 2$. Define $f_i$ by~\eqref{eq:fi} for $i=1,2,\dotsc,d$. Then if $\max_{i=1,2,\dotsc,d}|\alpha_i-\beta_i|$ is sufficiently small, we can find $(\epsilon_1,\epsilon_2,\dotsc,\epsilon_d)$ such that the flow map associated to $f=\sum_{i=1}^{d}\epsilon_i f_i$ maps $\alpha_i$ to $\beta_i$ for every $i=1,2,\dotsc,d$ in a given  time $T>0$. The proof is almost identical except for an additional difficulty for which we use Lemma~\ref{conj:1} for general $d\geq 2$.
\end{rem}

\begin{rem}
    At present we have not been able to treat initial data other than $(1,0)$ or $(\bar{\rho},0)$ with $\bar{\rho}>0$. On the other hand, in the case of viscous Burgers' equation, Horsin was able to include any sufficiently regular initial data in~\cite[Theorem~5.1]{Horsin08}. If we were to adapt his strategy to treat general initial data $(\rho_0,u_0)$, we would have to consider the linearization of~\eqref{eq:model} around $(\tilde{\rho},\tilde{u})$---the solution to~\eqref{eq:model} with the initial data $(\rho_0,u_0)$ and the external force $f=0$. The resulting linear system is of variable coefficient, and our approach using Fourier analysis to prove the linear independence claim in Proposition~\ref{prop:smooth_independence} becomes difficult to apply.
\end{rem}

\subsection{Comparison with Eulerian controllability results}
It is interesting to compare our theorem with known \textit{Eulerian} controllability results. In~\cite{EGGP12}, Ervedoza, Glass, Guerrero, and Puel proved an exact Eulerian controllability result for~\eqref{eq:model} with initial data in a small neighborhood of $(\rho,u)=(\bar{\rho},\bar{u})$ with $\bar{\rho}>0$ and $\bar{u}\neq 0$. More precisely, they proved the following:

\begin{thm}[{\cite[Theorem~1.1]{EGGP12}}]
    \label{thm:Ervedoza}
    Let $L$ and $T$ be positive numbers. Assume that $\bar{\rho}>0$ and $\bar{u}\neq 0$ satisfy $T>L/|\bar{u}|$. Then there exists $\epsilon_0>0$ such that for any $(\rho_0,u_0)\in H^3(0,L)\times H^3(0,L)$ with 
    \begin{equation}
        \| u_0-\bar{u} \|_{H^3(0,L)}+\| \rho_0-\bar{\rho} \|_{H^3(0,L)}\leq \epsilon_0,
    \end{equation}
    there exists a solution $(\rho,u)$ to
    \begin{equation}
        \label{eq:model_EGGP}
        \begin{dcases}
            \rho_t+(\rho u)_x=0 & \text{in $(0,T)\times (0,L)$}, \\
            \rho(u_t+uu_x)+p(\rho)_x=u_{xx} & \text{in $(0,T)\times (0,L)$}, \\
            (\rho,u)(0,x)=(\bar{\rho},\bar{u}) & \text{for $x\in (0,L)$}
        \end{dcases}
    \end{equation}
    satisfying
    \begin{equation}
        (\rho,u)(T,\cdot)=(\bar{\rho},\bar{u}) \quad \text{in $(0,L)$}.
    \end{equation}
\end{thm}

\begin{rem}
    Note that the missing boundary conditions in~\eqref{eq:model_EGGP} play the role of controls, and in this sense, Theorem~\ref{thm:Ervedoza} is a boundary controllability result. We remark that our Theorem~\ref{thm:local_exact_Lagrangian_controllability} implies a boundary controllability result as well: Let $(\rho_f,u_f)$ be the solution to~\eqref{eq:model} with $f=\epsilon_1 f_1+\epsilon_2 f_2$ given in Theorem~\ref{thm:local_exact_Lagrangian_controllability}. Then $(\rho,u)=\restr{(\rho_f,u_f)}{(0,T)\times (0,1)}$ solves~\eqref{eq:model_EGGP} satisfying $u(t,0)=0$ for $t\in (0,T)$. Thus Theorem~\ref{thm:local_exact_Lagrangian_controllability} implies a boundary controllability result with controls $u(t,1)=u_f(t,1)$ for $t\in (0,T)$ and $\rho(t,1)=\rho_f(t,1)$ for $t\in (0,T)$ with $u_f(t,1)<0$ (note that boundary data for the density is needed only when there is an inflow at the boundary).
\end{rem}

In contrast with the theorem above, if $\bar{u}=0$, it is proved that the linearization of~\eqref{eq:model_EGGP} is not exactly Eulerian controllable~\cite{RR07,CRR12}.\footnote{We should remark that this does not exclude the controllability for the \textit{nonlinear} system as we might be able to control the nonlinear one using for example the return method of Coron~\cite{Coron93}. We also note that the result in~\cite{Amosova11} does not contradict these non-controllability results as she considers the equations in a Lagrangian coordinate.}~However, in these papers, it is also shown that the system is \textit{approximately} Eulerian controllable even when $\bar{u}=0$. Since Horsin showed in~\cite{Horsin08} that for viscous Burgers' equation, the local exact Lagrangian controllability can be deduced from the approximate Eulerian controllability---or rather its equivalent unique continuation property---it is natural to ask whether we can prove the local exact Lagrangian controllability for~\eqref{eq:model} as well. This is answered in the affirmative by Theorem~\ref{thm:local_exact_Lagrangian_controllability}.

\subsection{Comparison with controllability results for incompressible flows}
Controllability properties for incompressible flows are much well studied. Let us focus here on Lagrangian controllability results and we refer to~\cite{LSZ22} and the references therein for Eulerian ones.

First, the following global Lagrangian controllability result for 2D incompressible Euler equations are proved in~\cite{GH10}: Take two homotopic Jordan curves $\gamma_1$ and $\gamma_2$ bounding domains of the same area in a bounded domain $\Omega \subset \mathbb{R}^2$, and let $\Sigma$ be a nonempty open subset of $\partial \Omega$. Then for any $\epsilon>0$ and smooth initial data, there exists a solution to the 2D incompressible Euler equations with the no-flux boundary condition on $\partial \Omega \setminus \Sigma$ such that the corresponding flow map moves $\gamma_1$ to $\gamma_2$ within an error smaller than $\epsilon$ in a given time. This result was later extended to the 3D case in~\cite{GH12} (see also~\cite{HK17} for another approach) and to incompressible viscous flows in~\cite{LSZ22}.

All the works cited above use potential flows and the harmonic functions theory as basic building blocks of the proof. But since compressible potential flows are not directly related to harmonic functions and are hard to analyze, it seems difficult to employ these methods to compressible flows. Perhaps for this reason, at present, all the controllability results for compressible Navier--Stokes equations, including ours, are local ones. It remains an interesting future research direction to investigate global controllability theorems for compressible Euler or Navier--Stokes flows.

\section{Proof}
\label{sec:proof}
Let us briefly explain the strategy following~\cite{Horsin08}: Using $f_1$ and $f_2$ defined by~\eqref{eq:fi}, we introduce a map $\Theta \colon U' \to \mathbb{R}^2$ in a neighborhood $U'$ of $(0,0)$; this map is designed so that if we can apply the inverse function theorem to $\Theta$ around $(0,0)$, Theorem~\ref{thm:local_exact_Lagrangian_controllability} follows immediately. Therefore, the proof is reduced to proving the non-vanishing of the Jacobian of $\Theta$ at $(0,0)$. To prove this, the key point is the linear independence of $\xi_1$ and $\xi_2$ stated in Proposition~\ref{prop:smooth_independence}, which is essentially a unique continuation property for the adjoint linear system~\eqref{eq:model_linearized_adjoint}; see Remark~\ref{rem:unique_continuation_principle}. For the viscous Burgers equation considered in~\cite{Horsin08}, the adjoint linear system is the heat equation and the unique continuation property is easily obtained by Holmgren's theorem. For the compressible Navier--Stokes equations considered in this paper, we need a more subtle analysis. In the boundary controllability setting, this is showed in~\cite{RR07,CRR12} (in its equivalent approximate Eulerian controllability form) using Fourier analytic techniques. We adapt this technique to our distributed control setting. Let us pursue this strategy below.

\subsection{Jacobian as a Gram determinant}
\label{sec:proof_Jac}
First, let $U'$ be a sufficiently small neighborhood of $(0,0)\in \mathbb{R}^2$ and set
\begin{equation}
    \Theta \colon U'\ni (\epsilon_1,\epsilon_2)\mapsto (\phi[\epsilon_1 f_1+\epsilon_2 f_2](T,\alpha_1),\phi[\epsilon_1 f_1+\epsilon_2 f_2](T,\alpha_2)) \in \mathbb{R}^2
\end{equation}
where $f_1$ and $f_2$ are defined by~\eqref{eq:fi}. We remind the reader that $\phi[\epsilon_1 f_1+\epsilon_2 f_2]$ is the flow map associated to the external force $f=\epsilon_1 f_1+\epsilon_2 f_2$ and that the flow map is well-defined by Proposition~\ref{prop:wellposedness}. We also note that $\Theta$ is $C^1$ in a neighborhood of $(0,0)$ which follows from smooth dependence on $\epsilon_1$ and $\epsilon_2$ of solutions to~\eqref{eq:model} with $f=\epsilon_1 f_1+\epsilon_2 f_2$; here we need sufficient regularity of $f_i$, which is why we introduced a cutoff function in~\eqref{eq:fi}. So if we can show that the Jacobian of $\Theta$ at $(0,0)$ is non-zero, that is,
\begin{equation}
  \label{eq:J}
  J\coloneqq \det
  \begin{pmatrix}
    \restr{d\phi[\epsilon_1 f_1](T,\alpha_1)/d\epsilon_1}{\epsilon_1=0} & \restr{d\phi[\epsilon_1 f_1](T,\alpha_2)/d\epsilon_1}{\epsilon_1=0} \\
    \restr{d\phi[\epsilon_2 f_2](T,\alpha_1)/d\epsilon_2}{\epsilon_2=0} & \restr{d\phi[\epsilon_2 f_2](T,\alpha_2)/d\epsilon_2}{\epsilon_2=0}
  \end{pmatrix}
  \neq 0,
\end{equation}
it follows that there exists an open neighborhood $U$ of $(0,0)$ and $V$ of $\Theta(0,0)=(\alpha_1,\alpha_2)$ such that $\Theta$ induces a one-to-one map from $U$ to $V$. Note that this implies Theorem~\ref{thm:local_exact_Lagrangian_controllability}: If the interval $J=[\beta_1,\beta_2]$ is sufficient close to $I$, then $(\beta_1,\beta_2)\in V$, so we can find a unique $(\epsilon_1,\epsilon_2)\in U$ such that $\phi[\epsilon_1 f_1+\epsilon_2 f_2](T,\alpha_i)=\beta_i$ for $i=1,2$.

To show the non-vanishing of the Jacobian $J$, it turns out that it suffices to analyze the linearization of~\eqref{eq:model} around $(\rho,u)=(1,0)$. This is because Lemma~\ref{lem:derivative_in_terms_of_linearized_equations} below shows that $J$ can be computed in terms of the solution $(\eta,v)$ to
\begin{equation}
  \label{eq:model_linearized}
  \begin{dcases}
    \eta_t+v_x=g & \text{in $(0,T)\times (0,\pi)$}, \\
    v_t+c^2\eta_x=v_{xx}+f & \text{in $(0,T)\times (0,\pi)$}, \\
    v(t,0)=0, \quad v(t,\pi)=0 & \text{for $t\in (0,T)$}, \\
    (\eta,v)(0,x)=(0,0) & \text{for $x\in (0,\pi)$}
  \end{dcases}
\end{equation}
with $f=f_i$ and $g=0$ (here $c^2=p'(1)$). Before stating the lemma, let us note that by simple energy estimates, we can show that there exists $C_T>0$ such that
\begin{align}
    \label{eq:estimate_linear}
    \begin{aligned}
        & \sup_{0\leq t\leq T}\left( \| \eta(t,\cdot) \|_{L^2(0,\pi)}+\| v(t,\cdot) \|_{L^2(0,\pi)} \right)+\left( \int_{0}^{T}\| v_x(t,\cdot) \|_{L^2(0,\pi)}^{2}\, dt \right)^{1/2} \\
        & \quad \leq C_T \left( \| f \|_{L^2((0,T)\times (0,\pi))}+\| g \|_{L^2((0,T)\times (0,\pi))} \right).
    \end{aligned}
\end{align}

\begin{lem}
  \label{lem:derivative_in_terms_of_linearized_equations}
  For $i=1,2$, let $(\eta_i,v_i)$ be the solution to~\eqref{eq:model_linearized} with $f=f_i$ and $g=0$. Then we have
  \begin{equation}
    \restr{d\phi[\epsilon_i f_i](T,x)/d\epsilon_i}{\epsilon_i=0}=\int_{0}^{T}v_i(t,x)\, dt
  \end{equation}
  for all $x\in (0,\pi)$.
\end{lem}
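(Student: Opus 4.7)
The plan is to differentiate the ODE defining the flow map with respect to $\epsilon_i$ at $\epsilon_i=0$ and combine it with the linearization of the PDE~\eqref{eq:model} around the trivial state $(\rho,u)=(1,0)$. Write $(\rho^\epsilon,u^\epsilon)$ for the solution to~\eqref{eq:model} with $f=\epsilon f_i$. When $\epsilon=0$ the unique solution provided by Proposition~\ref{prop:wellposedness} is $(\rho^0,u^0)=(1,0)$, so the corresponding flow map is $\phi[0](t,x)=x$. The key ingredient is that for $\epsilon$ in a neighborhood of $0$, the map $\epsilon \mapsto (\rho^\epsilon - 1,u^\epsilon)$ is differentiable into the functional class of Proposition~\ref{prop:wellposedness}; since $f_i$ is smooth by Proposition~\ref{prop:smooth_independence}, this follows from the implicit function theorem applied to the nonlinear operator associated with~\eqref{eq:model}, exactly as in the proof of Proposition~\ref{prop:wellposedness}. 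This step is the main technical obstacle, but it is a standard perturbation argument and was already invoked in the paper to justify that $\Theta$ is $C^1$ near $(0,0)$.

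Granting this smooth dependence, define $(\eta_i,v_i) := \partial_\epsilon(\rho^\epsilon-1,u^\epsilon)\vert_{\epsilon=0}$. Inserting the formal expansion $\rho^\epsilon = 1 + \epsilon \eta_i + o(\epsilon)$, $u^\epsilon = \epsilon v_i + o(\epsilon)$ into~\eqref{eq:model} with $f=\epsilon f_i$ and collecting the first order terms, one sees that the continuity equation gives $\partial_t\eta_i + \partial_x v_i = 0$, while in the momentum equation the convective term $\rho u u_x$ and the quadratic part of $p(\rho)_x$ contribute at order $O(\epsilon^2)$, so that $\partial_t v_i + c^2 \partial_x \eta_i = \partial_{xx} v_i + f_i$ with $c^2 = p'(1)$. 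The boundary and initial data for $(\eta_i,v_i)$ are inherited from those of $(\rho^\epsilon,u^\epsilon)$, yielding precisely system~\eqref{eq:model_linearized} with $f=f_i$ and $g=0$.

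Next, the flow map satisfies the integral equation
\begin{equation}
    \phi[\epsilon f_i](t,x) = x + \int_0^t u^\epsilon\bigl(s,\phi[\epsilon f_i](s,x)\bigr)\, ds.
\end{equation}
Differentiating in $\epsilon$ at $\epsilon=0$, using $u^0\equiv 0$ and $\phi[0](s,x)=x$, the chain rule gives
\begin{equation}
    \restr{\frac{d\phi[\epsilon f_i](t,x)}{d\epsilon}}{\epsilon=0} = \int_0^t \Bigl[ v_i(s,x) + u^0(s,x)\,\restr{\frac{d\phi[\epsilon f_i](s,x)}{d\epsilon}}{\epsilon=0}\Bigr] ds = \int_0^t v_i(s,x)\, ds,
\end{equation}
and evaluating at $t=T$ yields the claimed formula. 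The only nontrivial justification is the interchange of $\partial_\epsilon$ with the time integral and with the composition $u^\epsilon(\,\cdot\,,\phi[\epsilon f_i])$, which is routine given the regularity $u^\epsilon\in L^1(0,T;\mathrm{Lip}(0,\pi))$ from Proposition~\ref{prop:wellposedness} together with the smooth $\epsilon$-dependence discussed above.
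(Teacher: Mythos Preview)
Your argument is correct in spirit and reaches the right conclusion, but the execution differs from the paper's. The paper does not invoke the implicit function theorem or abstract smooth dependence at all; instead it proves directly that
\[
\phi[\epsilon_i f_i](T,x)=x+\epsilon_i\int_{0}^{T}v_i(t,x)\,dt+O(\epsilon_i^{2})
\]
by splitting the error into two pieces and estimating each explicitly. First it bounds $\int_0^T|u_i(t,\phi)-u_i(t,x)|\,dt=O(\epsilon_i^2)$ via the mean value theorem and the $H^1$ bounds of Proposition~\ref{prop:wellposedness}; then it bounds $\int_0^T|u_i(t,x)-\epsilon_i v_i(t,x)|\,dt=O(\epsilon_i^2)$ by observing that $(\rho_i-1-\epsilon_i\eta_i,\,u_i-\epsilon_i v_i)$ solves~\eqref{eq:model_linearized} with quadratic source terms, and applying~\eqref{eq:estimate_linear}. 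Your route trades these explicit estimates for an appeal to smooth $\epsilon$-dependence (which the paper does cite, but only in passing, to justify that $\Theta$ is $C^1$); this is more conceptual and shorter, while the paper's version is self-contained and yields a quantitative remainder. One small slip: in your chain-rule display the second term should carry $\partial_x u^0(s,x)$ rather than $u^0(s,x)$; since both vanish identically this does not affect the outcome.
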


\begin{proof}
  Fix $x\in (0,\pi)$. Let $(\rho_i,u_i)$ be the solution to~\eqref{eq:model} with $f=\epsilon_i f_i$. Note that
  \begin{equation}
    \label{eq:flow_map_formula}
    \phi[\epsilon_i f_i](t,x)=x+\int_{0}^{t}u_i(s,\phi[\epsilon_i f_i](s,x))\, ds \quad (0\leq t\leq T).
  \end{equation}
  Using this equality and applying the mean value theorem, with the help of Sobolev inequalities, we obtain
  \begin{align}
    & \int_{0}^{T}|u_i(t,\phi[\epsilon_i f_i](t,x))-u_i(t,x)|\, dt \\
    & \leq \int_{0}^{T}|\phi[\epsilon_i f_i](t,x)-x|\cdot \| \partial_x u_i(t,\cdot) \|_{L^{\infty}(0,\pi)}\, dt \\
    & \leq C\int_{0}^{T}\int_{0}^{t}|u_i(s,\phi[\epsilon_i f_i](s,x))|\, ds\cdot \| \partial_x u_i(t,\cdot) \|_{H^1(0,\pi)}\, dt \\
    & \leq C\int_{0}^{T}\int_{0}^{t}\| \partial_x u_i(s,\cdot) \|_{L^2(0,\pi)}\, ds\cdot \| \partial_x u_i(t,\cdot) \|_{H^1(0,\pi)}\, dt.
  \end{align}
  Then by Hölder's inequality as well as Proposition~\ref{prop:wellposedness}, we get
  \begin{align}
    & \int_{0}^{T}\int_{0}^{t}\| \partial_x u_i(s,\cdot) \|_{L^2(0,\pi)}\, ds\cdot \| \partial_x u_i(t,\cdot) \|_{H^1(0,\pi)}\, dt \\
    & \leq \int_{0}^{T}t^{1/2}\left( \int_{0}^{t}\| \partial_x u_i(s,\cdot) \|_{L^2(0,\pi)}^{2}\, ds \right)^{1/2}\cdot \| \partial_x u_i(t,\cdot) \|_{H^1(0,\pi)}\, dt \\
    & \leq T^{1/2}\| \partial_x u_i \|_{L^2(0,T;\, L^2(0,\pi))}\int_{0}^{T}\| \partial_x u_i(t,\cdot) \|_{H^1(0,\pi)}\, dt \\
    & \leq T\| \partial_x u_i \|_{L^2(0,T;\, H^1(0,\pi))}^{2}\leq CT\epsilon_{i}^{2}.
  \end{align}
  Next, since $(\eta,v)=(\rho_i-\epsilon_i \eta_i,u_i-\epsilon_i v_i)$ satisfies~\eqref{eq:model_linearized} with
  \begin{align}
      f & =-u_i\partial_x u_i-\frac{p'(\rho_i)-c^2(\rho_i-1)}{\rho_i}\partial_x \rho_i-\frac{\rho_i-1}{\rho_i}\partial_{xx}u_i-\frac{\rho_i-1}{\rho_i}\epsilon_i f_i, \\
      g & =-(\rho_i-1)\partial_x u_i-(\partial_x \rho_i)u_i,
  \end{align}
  Proposition~\ref{prop:wellposedness} and inequality~\eqref{eq:estimate_linear} imply
  \begin{align}
    \int_{0}^{T}|u_i(t,x)-\epsilon_i v_i(t,x)|\, dt
    & \leq CT^{1/2}\| \partial_x v \|_{L^2(0,T;\, L^2(0,\pi))} \\
    & \leq C_T \left( \| f \|_{L^2((0,T)\times (0,\pi))}+\| g \|_{L^2((0,T)\times (0,\pi))} \right) \leq C_T \epsilon_{i}^{2}.
  \end{align}
  Now using these inequalities to~\eqref{eq:flow_map_formula} with $t=T$, we conclude that
  \begin{equation}
    \phi[\epsilon_i f_i](T,x)=x+\int_{0}^{T}u_i(t,\phi[\epsilon_i f_i](t,x))\, dt=x+\epsilon_i \int_{0}^{T}v_i(t,x)\, dt+O(\epsilon_{i}^{2})
  \end{equation}
  as $\epsilon_i \to 0$. This proves the lemma.
\end{proof}

Now by a duality argument, we can write the Jacobian $J$ as the Gram determinant of $(\chi_{\eta}^{1/2}\xi_1,\chi_{\eta}^{1/2}\xi_2)\in L^2((0,T)\times \omega)$:

\begin{lem}
    \label{lem:J_is_Gram_det}
    The identity
    \begin{align}
        J
        & =\det \left( \int_{0}^{T}\int_{\omega}\chi_{\eta}\xi_i \xi_j\, dx\, dt \right)_{i,j=1,2}
    \end{align}
    holds for $J$ defined by~\eqref{eq:J}.
\end{lem}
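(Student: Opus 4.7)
The plan is to represent $J$ via a duality pairing between the forward linearized system~\eqref{eq:model_linearized} and the adjoint system~\eqref{eq:model_linearized_adjoint}. By Lemma~\ref{lem:derivative_in_terms_of_linearized_equations}, the $(i,j)$ entry of the matrix in~\eqref{eq:J} equals $\int_0^T v_i(t,\alpha_j)\, dt$, where $(\eta_i, v_i)$ solves~\eqref{eq:model_linearized} with source $f = f_i = \chi_\eta \xi_i$ and $g = 0$. It therefore suffices to prove, for each $i,j \in \{1,2\}$, the identity
\[
\int_0^T v_i(t,\alpha_j)\, dt = \int_0^T \int_\omega \chi_\eta \xi_i \xi_j\, dx\, dt,
\]
since the determinant formula then follows at once.

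To obtain this identity, I would multiply the first equation of~\eqref{eq:model_linearized} (with $f = f_i$, $g = 0$) by $c^2 \zeta_j$, the second by $\xi_j$, integrate over $(0,T)\times(0,\pi)$, and add the two resulting equalities. Integration by parts in $t$ transfers the time derivatives onto $(\zeta_j, \xi_j)$, and the temporal boundary contributions vanish thanks to $(\eta_i, v_i)(0,\cdot) = 0$ and $(\zeta_j, \xi_j)(T,\cdot) = 0$. Integration by parts in $x$ transfers the transport and viscous derivatives onto $(\zeta_j, \xi_j)$; the spatial boundary terms at $x = 0, \pi$ vanish thanks to the homogeneous Dirichlet conditions on $v_i$ and $\xi_j$ (the viscous term $v_{i,xx}\xi_j$ requires integrating by parts twice, but both boundary contributions disappear for the same reason). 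Inserting the adjoint equations $-\partial_t \zeta_j - \partial_x \xi_j = 0$ and $-\partial_t \xi_j - c^2 \partial_x \zeta_j - \partial_{xx}\xi_j = \delta_{\alpha_j}$ into the resulting expression, all interior couplings cancel exactly and only the two source pairings survive, yielding
\[
\int_0^T \int_0^\pi v_i\, \delta_{\alpha_j}\, dx\, dt = \int_0^T \int_0^\pi f_i\, \xi_j\, dx\, dt.
\]
The left-hand side equals $\int_0^T v_i(t,\alpha_j)\, dt$ by continuity of $v_i$, while the right-hand side equals $\int_0^T \int_\omega \chi_\eta \xi_i \xi_j\, dx\, dt$ because $f_i = \chi_\eta \xi_i$ and $\operatorname{supp}\chi_\eta \subset \omega$.

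The main subtlety, rather than a genuine obstacle, is justifying these manipulations given that Proposition~\ref{prop:smooth_independence} provides smoothness of $\xi_j$ only on $(0,T)\times(1,\pi)$, with a singularity at $x = \alpha_j \in (0,1)$ from the Dirac source. I would handle this either by reading the duality identity as a distributional pairing---exploiting that the smoothness of $(\eta_i, v_i)$ inherited from the smoothness of $f_i$ makes it a valid test pair against the distributional equations defining $(\zeta_j, \xi_j)$---or, more concretely, by replacing $\delta_{\alpha_j}$ in~\eqref{eq:model_linearized_adjoint} with a standard mollification $\delta_{\alpha_j}^\varepsilon$, carrying out the integration by parts cleanly at the regularized level where every function involved is smooth, and then passing to the limit $\varepsilon \to 0$ using energy estimates for the adjoint system analogous to~\eqref{eq:estimate_linear} together with the continuity of $t \mapsto v_i(t, \alpha_j)$.
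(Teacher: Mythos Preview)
Your proof is correct and follows the same duality strategy as the paper: identify the matrix entries as $\int_0^T v_i(t,\alpha_j)\,dt$ via Lemma~\ref{lem:derivative_in_terms_of_linearized_equations}, then integrate by parts against the adjoint system to land on $\int_0^T\int_\omega \chi_\eta \xi_i\xi_j\,dx\,dt$. The only cosmetic difference is that the paper eliminates $\zeta_j$ at the outset by writing $\partial_x\zeta_j=\partial_{xx}\int_t^T\xi_j\,ds$ and works solely with $\xi_j$, whereas you pair $(\eta_i,v_i)$ against $(c^2\zeta_j,\xi_j)$ directly; both computations are equivalent, and your handling of the Dirac singularity (distributional reading or mollification) matches the paper's own footnoted caveat.
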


\begin{proof}
    Define $(\eta_i,v_i)$ as in Lemma~\ref{lem:derivative_in_terms_of_linearized_equations}. Let
    \begin{equation}
        L=
        \begin{pmatrix}
            \partial_t & \partial_x \\
            c^2 \partial_x & \partial_t-\partial_{x}^{2}
        \end{pmatrix}
        \quad \text{and} \quad L^*=
        \begin{pmatrix}
            -\partial_t & -\partial_x \\
            -c^2 \partial_x & -\partial_t-\partial_{x}^{2}
        \end{pmatrix}.
    \end{equation}
    Note that
    \begin{equation}
        L
        \begin{pmatrix}
            \eta_i \\
            v_i
        \end{pmatrix}
        =
        \begin{pmatrix}
            0 \\
            f_i
        \end{pmatrix}
        \quad \text{and} \quad L^*
        \begin{pmatrix}
            \zeta_j \\
            \xi_j
        \end{pmatrix}
        =
        \begin{pmatrix}
            0 \\
            \delta_{\alpha_j}
        \end{pmatrix}.
    \end{equation}
    Then integration by parts yields
    \begin{align}
        \int_{0}^{T}v_i(t,\alpha_j)\, dt
        & =\int_{0}^{T}\int_{0}^{\pi}
        \begin{pmatrix}
            \eta_i \\
            v_i
        \end{pmatrix}
        \cdot
        \begin{pmatrix}
            0 \\
            \delta_{\alpha_j}
        \end{pmatrix}
        \, dx\, dt \\
        & =\int_{0}^{T}\int_{0}^{\pi}
        \begin{pmatrix}
            \eta_i \\
            v_i
        \end{pmatrix}
        \cdot L^*
        \begin{pmatrix}
            \zeta_j \\
            \xi_j
        \end{pmatrix}
        \, dx\, dt \\
        & =\int_{0}^{T}\int_{0}^{\pi}L
        \begin{pmatrix}
            \eta_i \\
            v_i
        \end{pmatrix}
        \cdot
        \begin{pmatrix}
            \zeta_j \\
            \xi_j
        \end{pmatrix}
        \, dx\, dt \\
        & =\int_{0}^{T}\int_{0}^{\pi}
        \begin{pmatrix}
            0 \\
            f_i
        \end{pmatrix}
        \cdot
        \begin{pmatrix}
            \zeta_j \\
            \xi_j
        \end{pmatrix}
        \, dx\, dt \\
        & =\int_{0}^{T}\int_{0}^{\pi}f_i\xi_j\, dx\, dt=\int_{0}^{T}\int_{\omega}\chi_{\eta}\xi_i\xi_j\, dx\, dt.
    \end{align}
    This and Lemma~\ref{lem:derivative_in_terms_of_linearized_equations} prove the desired result.
\end{proof}

\begin{rem}
    \label{rem:ODE_control}
    Our Lagrangian controllability problem can be recast into that for a system of ODEs: Let $u_{\epsilon_1 f_1+\epsilon_2 f_2}$ be the solution to~\eqref{eq:model} with $f=\epsilon_1 f_1+\epsilon_2 f_2$ and consider the ODE system
    \begin{equation}
        \begin{dcases}
            dx_1(t)/dt=u_{\epsilon_1 f_1+\epsilon_2 f_2}(t,x_1(t)) & (0\leq t\leq T), \\
            dx_2(t)/dt=u_{\epsilon_1 f_1+\epsilon_2 f_2}(t,x_2(t)) & (0\leq t\leq T), \\
            x_1(0)=\alpha_1, \quad x_2(0)=\alpha_2.
        \end{dcases}
    \end{equation}
    Then we try to find appropriate $\epsilon_1$ and $\epsilon_2$ so that $x_1(T)=\beta_1$ and $x_2(T)=\beta_2$ hold. If we linearize this system around $(\epsilon_1,\epsilon_2,x_1,x_2)=(0,0,\alpha_1,\alpha_2)$, we get
    \begin{equation}
        \label{eq:ODE_linearized}
        \begin{dcases}
            \frac{d}{dt}
            \begin{bmatrix}
                x_1 \\
                x_2
            \end{bmatrix}
            (t)
            =
            \begin{bmatrix}
                v_1(t,\alpha_1) & v_2(t,\alpha_1) \\
                v_1(t,\alpha_2) & v_2(t,\alpha_2)
            \end{bmatrix}
            \begin{bmatrix}
                \epsilon_1 \\
                \epsilon_2
            \end{bmatrix}
            & (0\leq t\leq T), \\
            x_1(0)=\alpha_1, \quad x_2(0)=\alpha_2
        \end{dcases}
    \end{equation}
    with $v_i$ defined in Lemma~\ref{lem:derivative_in_terms_of_linearized_equations}. The controllability Gramian (see~\cite[Definition~1.10]{Coron07}) of~\eqref{eq:ODE_linearized} is
    \begin{equation}
        \left( \int_{0}^{T}v_i(t,\alpha_j) \right)_{i,j=1,2}=\left( \int_{0}^{T}\int_{\omega}\chi_{\eta}\xi_i \xi_j \, dx\, dt \right)_{i,j=1,2}
    \end{equation}
    by the proof of Lemma~\ref{lem:J_is_Gram_det}. Therefore, the controllability of~\eqref{eq:ODE_linearized} is equivalent to the non-vanishing of our Jacobian $J$. Note that by the linear test~\cite[Theorem~3.6]{Coron07}, the controllability of the linearized system implies the nonlinear local controllability result as well.
\end{rem}

\subsection{Smoothness and linear independence of $\xi_1$ and $\xi_2$}
\label{sec:proof_smooth_independence}
By Lemma~\ref{lem:J_is_Gram_det} and the Cauchy--Schwarz inequality, Theorem~\ref{thm:local_exact_Lagrangian_controllability} is proved if we can show that $\xi_1$ and $\xi_2$ are linearly independent in $L^2((0,T)\times \omega)$ and $\eta>0$ is taken small enough. So it remains to prove Proposition~\ref{prop:smooth_independence}.

For this purpose, we first need an explicit formula for $\xi_i$:

\begin{lem}
  \label{lem:formula_of_xi_i}
  For $i=1,2$, let $(\zeta_i,\xi_i)$ be the solution to~\eqref{eq:model_linearized_adjoint}. Set $c=\sqrt{p'(1)}>0$ and
  \begin{equation}
    \label{eq:lambda_mu}
    \lambda_n=-\frac{n^2}{2}-\frac{n^2}{2}\sqrt{1-4c^2/n^2}, \quad \mu_n=-\frac{n^2}{2}+\frac{n^2}{2}\sqrt{1-4c^2/n^2} \quad (n\geq 1).
  \end{equation}
  Then if $n_0 \coloneqq 2c$ is not an integer, we have
  \begin{equation}
    \xi_i(t,x)=\frac{2}{\pi}\sum_{n\geq 1}\frac{\sin n\alpha_i}{\mu_n-\lambda_n}\left( e^{\mu_n(T-t)}-e^{\lambda_n(T-t)} \right) \sin nx.
  \end{equation}
  If $n_0$ is an integer, we have
  \begin{equation}
    \xi_i(t,x)=\frac{2}{\pi}q(t)\sin n_0 \alpha_i \sin n_0 x+\frac{2}{\pi}\sum_{n\geq 1,\, n\neq n_0}\frac{\sin n\alpha_i}{\mu_n-\lambda_n}\left( e^{\mu_n(T-t)}-e^{\lambda_n(T-t)} \right) \sin nx
  \end{equation}
  where
  \begin{equation}
    q(t)=\frac{1}{c^2}\left( 1-e^{-2c^2(T-t)} \right)-(T-t)e^{-2c^2(T-t)}.
  \end{equation}
\end{lem}

\begin{proof}
To calculate the solution to the non-homogeneous system~\eqref{eq:model_linearized_adjoint} by Duhamel's principle, we first need to compute the associated semigroup. Let us write $d_x=d/dx$ and define the operator $A\colon [L^2(0,\pi)]^2 \supset D(A)\to [L^2(0,\pi)]^2$ by
\begin{equation}
  D(A)=\left\{ (\zeta,\xi)^T \in [L^2(0,\pi)]^2 \relmiddle| \xi \in H_{0}^{1}(0,\pi),\, c^2\zeta+d_x \xi \in H^1(0,\pi) \right\}
\end{equation}
and
\begin{equation}
  A=
  \begin{pmatrix}
    0 & d_x \\
    c^2 d_x & d_{x}^{2}
  \end{pmatrix}.
\end{equation}
It can be shown that $-A$ is a maximal monotone operator in $[L^2(0,\pi)]^2$; see~\cite[Lemma~2.1]{CRR12}. Let us compute the semigroup $(S(t))_{t\geq 0}$ generated by $-A$ following~\cite{CRR12}. For this purpose, we introduce
\begin{gather}
  \Phi_0(x)=\frac{1}{\sqrt{c^2\pi}}(1,0)^T, \\
  \Phi_{2n}(x)=\sqrt{\frac{2}{c^2\pi}}(\cos nx,0)^T, \quad \Phi_{2n-1}(x)=\sqrt{\frac{2}{\pi}}(0,\sin nx)^T \quad (n\geq 1).
\end{gather}
These form an orthonormal basis of $[L^2(0,\pi)]^2$ with respect to the inner product
\begin{equation}
  \left\langle
    \begin{pmatrix}
      \zeta_1 \\
      \xi_1
    \end{pmatrix},
    \begin{pmatrix}
      \zeta_2 \\
      \xi_2
    \end{pmatrix}
  \right\rangle
  =c^2\int_{0}^{\pi}\zeta_1 \zeta_2 \, dx+\int_{0}^{\pi}\xi_1 \xi_2 \, dx.
\end{equation}
Note that the spaces
\begin{equation}
  V_0=\spn \{ \Phi_0 \}, \quad V_n=\spn \{ \Phi_{2n},\Phi_{2n-1} \} \quad (n\geq 1)
\end{equation}
are invariant under $A$, and the restriction of $A$ to $V_n$ ($n\geq 1$) has the matrix representation
\begin{equation}
  A_n=
  \begin{pmatrix}
    0 & cn \\
    -cn & -n^2
  \end{pmatrix}
\end{equation}
with respect to the basis $(\Phi_{2n},\Phi_{2n-1})$. If $n\neq n_0$, it can be diagonalized as
\begin{equation}
  A_n=P_n
  \begin{pmatrix}
    \lambda_n & 0 \\
    0 & \mu_n
  \end{pmatrix}
  P_{n}^{-1}
\end{equation}

and
\begin{equation}
  P_n=
  \begin{pmatrix}
    c & c \\
    \lambda_n/n & \mu_n/n
  \end{pmatrix}.
\end{equation}
Therefore, $S(t)$ has the matrix representation
\begin{equation}
    S_n(t)=e^{-A_n t}=P_n
    \begin{pmatrix}
        e^{-\lambda_n t} & 0 \\
        0 & e^{-\mu_n t}
    \end{pmatrix}
    P_{n}^{-1}
\end{equation}
on the space $V_n$. If $n=n_0$ on the other hand, $A_n$ can be triangulated as
\begin{equation}
  A_n=Q_n
  \begin{pmatrix}
    \lambda_n & 1 \\
    0 & \lambda_n
  \end{pmatrix}
  Q_{n}^{-1}
\end{equation}
with
\begin{equation}
  Q_n=
  \begin{pmatrix}
    c & -c/\lambda_n \\
    \lambda_n/n & 0
  \end{pmatrix},
\end{equation}
and $S(t)$ has the matrix representation
\begin{equation}
    S_n(t)=Q_n
    \begin{pmatrix}
        e^{-\lambda_n t} & te^{-\lambda_n t} \\
        0 & e^{-\lambda_n t}
    \end{pmatrix}
    Q_{n}^{-1}.
\end{equation}

Now, by Duhamel's principle, the solution $(\zeta_i,\xi_i)$ to~\eqref{eq:model_linearized_adjoint} can be written as
\begin{equation}
    \begin{pmatrix}
        \zeta_i \\
        \xi_i
    \end{pmatrix}
    (t)=\int_{t}^{T}S(t-\tau)
    \begin{pmatrix}
        0 \\
        \delta_{\alpha_i}
    \end{pmatrix}
    \, d\tau.
\end{equation}
So we get
\begin{align}
    \begin{pmatrix}
        \zeta_i \\
        \xi_i
    \end{pmatrix}
    (t,x)
    & =\int_{t}^{T}\left\langle
    \begin{pmatrix}
        0 \\
        \delta_{\alpha_i}
    \end{pmatrix}
    , \Phi_0 \right\rangle \Phi_0 \, d\tau \\
    & \quad +\sum_{n\geq 1}\int_{t}^{T}\left( \Phi_{2n},\Phi_{2n-1} \right)S_n(t-\tau)\left\{ \left\langle
    \begin{pmatrix}
        0 \\
        \delta_{\alpha_i}
    \end{pmatrix}
    , \Phi_{2n} \right\rangle \bm{e}_1+\left\langle
    \begin{pmatrix}
        0 \\
        \delta_{\alpha_i}
    \end{pmatrix}
    , \Phi_{2n-1} \right\rangle \bm{e}_2 \right\} \, d\tau \\
    & =\sum_{n\geq 1}\int_{t}^{T}\left( \Phi_{2n},\Phi_{2n-1} \right)S_n(t-\tau)\left\langle
    \begin{pmatrix}
        0 \\
        \delta_{\alpha_i}
    \end{pmatrix}
    , \Phi_{2n-1} \right\rangle \bm{e}_2 \, d\tau.
\end{align}
Here $\bm{e}_1=(1,0)^T$ and $\bm{e}_2=(0,1)^T$ are the standard basis of $\mathbb{R}^2$. Computing these terms explicitly, we get the desired formulas.
\end{proof}

The smoothness of $\xi_i$ on $(0,T)\times (1,\pi)$ can be proved using Lemma~\ref{lem:formula_of_xi_i}:

\begin{lem}
  \label{lem:smoothness}
  For $i=1,2$, let $(\zeta_i,\xi_i)$ be the solution to~\eqref{eq:model_linearized_adjoint}. Set $X_i=(0,\pi)\setminus \{ \alpha_i \}$. Then $\xi_i$ is bounded and smooth on $(0,T)\times X_i$.
\end{lem}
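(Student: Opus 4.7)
I would exploit the explicit Fourier representation of $\xi_i$ from Lemma~\ref{lem:formula_of_xi_i} by splitting its $n$-series into a finite number of explicit pieces, which may fail to be smooth at $x=\alpha_i$, plus a tail of arbitrarily high regularity. The key asymptotic data, for $n>2c$, are $\sigma_n := \sqrt{1-4c^2/n^2} = 1+O(n^{-2})$, $\mu_n = -2c^2/(1+\sigma_n) = -c^2 + O(n^{-2})$, $\mu_n-\lambda_n = n^2\sigma_n$, and $\lambda_n \sim -n^2$. Consequently, the contribution of the $e^{\lambda_n(T-t)}$-part of each coefficient is superexponentially damped on any compact subset of $[0,T)\times[0,\pi]$, so that sub-series is $C^\infty$ there after termwise differentiation. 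The $e^{\mu_n(T-t)}$-part has dominant behaviour $e^{-c^2(T-t)}/n^2$, which when summed against $\sin n\alpha_i\sin nx$ reproduces $e^{-c^2(T-t)}G_i(x)$, with
\begin{equation}
G_i(x) := \frac{2}{\pi}\sum_{n\geq 1}\frac{\sin n\alpha_i\sin nx}{n^2}
\end{equation}
the Dirichlet Green's function of $-\partial_x^2$ on $(0,\pi)$ evaluated at $\alpha_i$: piecewise linear, Lipschitz on $[0,\pi]$, and smooth on $X_i=(0,\pi)\setminus\{\alpha_i\}$.

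To obtain arbitrarily high regularity on $X_i$, I would push the expansion further. Because $u\mapsto 1/\sqrt{1-4c^2u}$ and $u\mapsto -2c^2/(1+\sqrt{1-4c^2u})$ are real-analytic near $u=0$, substituting $u=1/n^2$ and setting $\tau=T-t$ gives, for every $N\geq 1$ and $n$ large,
\begin{equation}
\frac{e^{\mu_n\tau}}{n^2\sigma_n} = e^{-c^2\tau}\sum_{k=0}^{N-1}\frac{Q_k(\tau)}{n^{2k+2}} + R_N(n,\tau),
\end{equation}
with polynomials $Q_k$ (independent of $n$) and $|\partial_\tau^\ell R_N(n,\tau)|\leq C_{N,\ell}\,n^{-2N-2}$ uniformly for $\tau\in[0,T]$. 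Inserting this into the formula of Lemma~\ref{lem:formula_of_xi_i}, I would express $\xi_i$ as a finite sum of terms $e^{-c^2(T-t)}Q_k(T-t)\,G_i^{(k)}(x)$ plus a remainder Fourier series and the $e^{\lambda_n(T-t)}$-part already treated, where $G_i^{(k)}(x) := \frac{2}{\pi}\sum_n \sin n\alpha_i\sin nx/n^{2k+2}$ satisfies $-\partial_x^2 G_i^{(k)} = G_i^{(k-1)}$ with Dirichlet data and is, by an easy induction, of class $C^{2k}([0,\pi])$ and smooth on $X_i$. The remainder Fourier series in turn has coefficients of size $n^{-2N-2}$ with uniform bounds on all $t$-derivatives, hence defines a function of class $C^{2N}$ in $x$ on $[0,\pi]$ with smooth $t$-dependence.

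Combining the three contributions and choosing $N$ larger than any prescribed order of regularity shows that $\xi_i$ is $C^\infty$ on $(0,T)\times X_i$. Boundedness is immediate from the bound $|c_n(t)|\leq 2/(n^2\sigma_n)$ on the coefficients of the original series, which is absolutely summable. The exceptional case $n_0=2c\in\mathbb{N}$ merely adds the single summand $\frac{2}{\pi}q(t)\sin n_0\alpha_i\sin n_0 x$, which is manifestly smooth and bounded and so does not disturb the argument. The main technical point is to control $R_N$ and its $\tau$-derivatives \emph{uniformly up to $\tau=0$} (that is, up to $t=T$), which is exactly what analyticity of the auxiliary functions in $1/n^2$ provides.
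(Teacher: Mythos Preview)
Your argument is correct and follows the same overall strategy as the paper: dispose of the $e^{\lambda_n(T-t)}$-part via exponential decay on compacta of $[0,T)$, expand the $e^{\mu_n(T-t)}/(\mu_n-\lambda_n)$ coefficients asymptotically in powers of $1/n^{2}$, identify the resulting trigonometric series as piecewise polynomials singular only at $\alpha_i$, and push the remainder into an arbitrarily high regularity class. The paper carries this out by applying the product-to-sum identity $\sin n\alpha_i\sin nx=\tfrac12\{\cos n(x-\alpha_i)-\cos n(x+\alpha_i)\}$ and invoking the explicit formula $\sum_{n\geq1}n^{-2}\cos nx=x^{2}/4-\pi x/2+\pi^{2}/6$ on $[0,2\pi]$, with $t$-derivatives taken first and $t_0$ frozen afterwards; you instead keep the sine form, interpret the building blocks $G_i^{(k)}=\tfrac{2}{\pi}\sum_n n^{-2k-2}\sin n\alpha_i\sin nx$ as iterated Dirichlet Green's functions, and retain the $\tau$-dependence through explicit polynomials $Q_k(\tau)$. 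Your packaging has the mild advantage that joint $(t,x)$-regularity is immediate, whereas the paper's fix-$t_0$-then-vary-$x$ presentation implicitly relies on the uniformity in $t_0$ of the $O(n^{-2K-2})$ bound; conversely, the paper's route avoids the small induction on the $G_i^{(k)}$. Either way the substance is the same.
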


\begin{proof}
    The boundedness is clear from Lemma~\ref{lem:formula_of_xi_i} and the fact that $\mu_n-\lambda_n=O(n^2)$ as $n\to \infty$. Now note that $\lambda_n$ and $\mu_n$ are both real for all $n\geq n_0=2c$. By Lemma~\ref{lem:formula_of_xi_i} and $\lambda_n \leq -n^2/2$ for $n\geq n_0$, it suffices to show that
    \begin{equation}
        \sum_{n=n_0}^{\infty}\frac{\sin n\alpha_i}{\mu_n-\lambda_n}e^{\mu_n(T-t)}\sin nx=\sum_{n=n_0}^{\infty}\frac{1}{\mu_n-\lambda_n}e^{\mu_n(T-t)}\frac{1}{2}\left\{ \cos n(x-\alpha_i)-\cos n(x+\alpha_i) \right\}
    \end{equation}
    is smooth on $(0,T)\times X_i$. For this purpose, we introduce 
    \begin{equation}
        u(t,x)=\sum_{n=n_0}^{\infty}\frac{1}{\mu_n-\lambda_n}e^{\mu_n(T-t)}\cos nx.
    \end{equation}
    It is easy to see that $u$ is infinitely differentiable in $t$ on $(0,T)\times \mathbb{R}$ and that
    \begin{equation}
        \partial_{t}^{l}u(t,x)=\sum_{n=n_0}^{\infty}\frac{(-\mu_n)^{l}}{\mu_n-\lambda_n}e^{\mu_n(T-t)}\cos nx
    \end{equation}
    for any $l\geq 0$. To prove the lemma, it suffices to show that $\partial_{t}^{l}u$ is infinitely differentiable in $x$ on $(0,T)\times \{ (-2\pi,0)\cup (0,2\pi) \}$. Fix $t_0 \in (0,T)$. By~\eqref{eq:lambda_mu}, for any integer $K\geq 1$, there exist real numbers $a_k=a_k(t_0,l)$ ($k=1,2,\dotsc,K$) such that
    \begin{equation}
        \frac{(-\mu_n)^l}{\mu_n-\lambda_n}e^{\mu_n(T-t_0)}=\sum_{k=1}^{K}\frac{a_k}{n^{2k}}+O(n^{-2K-2}) \quad (n\to \infty).
    \end{equation}
    Note that
    \begin{equation}
        \sum_{n=n_0}^{\infty}O(n^{-2K-2})\cos nx
    \end{equation}
    is $2K$-times continuously differentiable on $\mathbb{R}$. On the other hand, it is well-known that\footnote{Extend the polynomial on the right-hand side from $[0,\pi]$ to $[-\pi,\pi]$ as an even function; the left-hand side is then its cosine expansion.}
     \begin{equation}
        \sum_{n=1}^{\infty}\frac{\cos nx}{n^2}=\frac{x^2}{4}-\frac{\pi x}{2}+\frac{\pi^2}{6} \quad (0\leq x\leq 2\pi).
    \end{equation}
    Therefore, the sum
    \begin{equation}
        \sum_{n=n_0}^{\infty}\frac{a_1}{n^2}\cos nx
    \end{equation}
    is infinitely differentiable on $(-2\pi,0)\cup (0,2\pi)$. By repeating termwise integration, we see that
    \begin{equation}
        \sum_{n=n_0}^{\infty}\frac{a_k}{n^{2k}}\cos nx
    \end{equation}
    is also infinitely differentiable on $(-2\pi,0)\cup (0,2\pi)$ for any $1\leq k\leq K$. Since $K\geq 1$ is arbitrary, the proof is complete.
\end{proof}

We finally prove the independence of $\xi_1$ and $\xi_2$ in $L^2((0,T)\times \omega)$:

\begin{lem}
    \label{lem:independence}
    The functions $\xi_1$ and $\xi_2$ defined through~\eqref{eq:model_linearized_adjoint} are linearly independent in $L^2((0,T)\times \omega)$.
\end{lem}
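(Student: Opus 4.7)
Suppose $a_1\xi_1 + a_2\xi_2 = 0$ in $L^2((0,T)\times \omega)$ with $(a_1,a_2) \in \mathbb{R}^2$. By Lemma~\ref{lem:smoothness} the linear combination is smooth on the larger set $(0,T)\times(1,\pi)$, so the vanishing is pointwise on $(0,T)\times \omega$. Fixing $x_0 \in \omega$, the plan is to regard
\begin{equation}
\phi(s) := a_1\xi_1(T-s,x_0) + a_2\xi_2(T-s,x_0)
\end{equation}
as an explicit sum of exponentials via Lemma~\ref{lem:formula_of_xi_i}, and to extract from the identity $\phi \equiv 0$ the vanishing of each ``distributional Fourier coefficient'' $A_n := a_1\sin(n\alpha_1) + a_2\sin(n\alpha_2)$.

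Since $\mu_n \to -c^2$ and $\lambda_n \sim -n^2$, the series defining $\phi(s)$ converges absolutely and uniformly on compact subsets of the right half-plane $\{\Re s > 0\}$, where it defines a holomorphic function. The vanishing of $\phi$ on the real interval $(0,T)$ propagates by the identity theorem to all of $\{\Re s > 0\}$; in particular $\phi \equiv 0$ on $(0,\infty)$. I would then compute the Laplace transform $\hat\phi(z) = \int_0^\infty e^{-zs}\phi(s)\,ds$ term by term (valid for $\Re z$ larger than every $\Re\mu_n$) to obtain
\begin{equation}
\hat\phi(z) = -\frac{2}{\pi}\sum_{n\geq 1}\frac{A_n\sin(nx_0)}{(z-\mu_n)(z-\lambda_n)} \equiv 0,
\end{equation}
with an extra rational contribution when $n_0 \in \mathbb{N}$. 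A short algebraic check on the characteristic polynomial $\nu^2 + n^2\nu + c^2 n^2$ shows that any shared root for indices $m\neq n$ would have to equal $-c^2$, which is impossible, so the family $\{\mu_n,\lambda_n\}_{n\geq 1}$ consists of pairwise distinct complex numbers. The series therefore continues $\hat\phi$ to a function meromorphic on $\mathbb{C}\setminus\{-c^2\}$ with a simple pole at each $\mu_n$ and $\lambda_n$; equating the residue at $z=\mu_n$ to zero yields $A_n\sin(nx_0)=0$ for every $n$.

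Since $x_0 \in \omega$ is arbitrary and $\sin(nx_0)$ has only isolated zeros, we conclude $A_n=0$ for all $n\geq 1$. Specializing to $n = 1,2$ gives a linear system for $(a_1,a_2)$ with determinant $2\sin(\alpha_1)\sin(\alpha_2)(\cos(\alpha_2)-\cos(\alpha_1))$, which is nonzero for $0<\alpha_1<\alpha_2<1$; hence $a_1 = a_2 = 0$, completing the proof of linear independence.

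The main technical obstacle is the bookkeeping around the accumulation point $\{\mu_n\} \to -c^2$: although each individual residue is extracted away from this point, one must carefully justify that the termwise series truly realizes the meromorphic continuation of $\hat\phi$ past its natural half-plane of convergence and that the identity theorem still applies modulo the accumulation. The degenerate case $n_0 = 2c \in \mathbb{N}$ is a secondary annoyance: the $n = n_0$ summand in $\phi$ must be replaced by the Laplace transform of the $q(t)$ contribution from Lemma~\ref{lem:formula_of_xi_i}, whose simple pole at $z = 0$ (shared with no other term) immediately gives $A_{n_0}\sin(n_0 x_0) = 0$, so the conclusion is unchanged.
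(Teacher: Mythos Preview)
Your argument is correct and reaches the same conclusion as the paper, but by a genuinely different route. After the common first step of analytically continuing the time series $\phi(s)$ to the right half-plane, the paper invokes Lemma~\ref{lem:Rosier_Rouchon_variation} (a Rosier--Rouchon type result): it fixes a single point $x_0=\gamma\pi$ with $\gamma$ irrational so that $\sin(n x_0)\neq 0$ for every $n$, and then extracts the vanishing of each coefficient by an averaging argument on $F(1+it)$. You instead pass to the Laplace transform and read off residues. Both methods ultimately separate the modes by exploiting the distinctness of the $\lambda_n,\mu_n$, which you verify cleanly via the characteristic polynomial. Your approach has the pleasant feature that the final step $A_n=0\Rightarrow a_1=a_2=0$ is obtained from only $n=1,2$ with an explicit $2\times 2$ determinant, which is exactly the $d=2$ case of the paper's Lemma~\ref{conj:2}; the paper's route has the advantage that a single well-chosen $x_0$ suffices rather than varying $x_0$ over $\omega$.

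Regarding the ``main technical obstacle'' you flag: it is less serious than you suggest. You do not need to interpret the series as a continuation of $\hat\phi$ at all. The series itself is a holomorphic function on the connected open set $\Omega=\mathbb{C}\setminus(\{-c^2\}\cup\{\mu_n\}\cup\{\lambda_n\})$, since removing a convergent real sequence together with its limit leaves a connected domain, and the $1/|z-\lambda_n|\sim 1/n^2$ decay gives local uniform convergence away from $-c^2$ and the poles. On $\{\Re z>0\}\subset\Omega$ the series equals $\hat\phi\equiv 0$, so by the identity theorem it vanishes on all of $\Omega$; the residue at each isolated $\mu_m$ is then extracted on a small punctured disk avoiding $-c^2$ and the neighboring $\mu_{m\pm 1}$. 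Your handling of the degenerate case $n_0=2c\in\mathbb{N}$ via the pole of the Laplace transform of $q$ at $z=0$ is also correct, since $0\notin\{\mu_n,\lambda_n\}$.
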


\begin{proof}
    Suppose that there exist $c_1,c_2 \in \mathbb{R}$ such that $c_1 \xi_1+c_2 \xi_2=0$ in $L^2((0,T)\times \omega)$. By Lemma~\ref{lem:formula_of_xi_i}, if $2c$ is not an integer, we have
    \begin{equation}
        \xi_i(t,x)=\frac{2}{\pi}\sum_{n\geq 1}\frac{\sin n\alpha_i}{\mu_n-\lambda_n}\left( e^{\mu_n(T-t)}-e^{\lambda_n(T-t)} \right)\sin nx
    \end{equation}
    for $(t,x)\in (0,T)\times \omega$, where $\lambda_n$ and $\mu_n$ are defined by~\eqref{eq:lambda_mu}. By the density of irrational numbers, there exists 
    $\gamma \in \mathbb{R}\setminus\mathbb{Q}$
    such that $\gamma \pi \in \omega$. Since $f_i$ is continuous on $(0,T)\times \omega$ by Lemma~\ref{lem:smoothness}, we have
    \begin{equation}
        \sum_{n\geq 1}\frac{c_1 \sin n\alpha_1+c_2 \sin n\alpha_2}{\mu_n-\lambda_n}\left( e^{\mu_n(T-t)}-e^{\lambda_n(T-t)} \right)\sin n\gamma \pi=0.
    \end{equation}
    Note that $\sin n\gamma \pi \neq 0$ for $n\geq 1$. As $\mu_n-\lambda_n=O(n^2)$, the series is absolutely convergent, and we can apply Lemma~\ref{lem:Rosier_Rouchon_variation} to conclude that $c_1 \sin n\alpha_1+c_2 \sin n\alpha_2=0$ for $n\geq 1$. By Lemma~\ref{conj:1}, this implies $c_1=c_2=0$. The case when $2c$ is an integer can be treated in a similar manner.
\end{proof}

\begin{rem}
    \label{rem:unique_continuation_principle}
    By a similar reasoning to that in the proof of Proposition~\ref{prop:smooth_independence}, we can prove the following unique continuation property: Let $(\zeta,\xi)$ be the solution to
    \begin{equation}
        \begin{dcases}
            -\zeta_t-\xi_x=0 & \text{in $(0,T)\times (0,\pi)$}, \\
            -\xi_t-c^2 \zeta_x=\xi_{xx} & \text{in $(0,T)\times (0,\pi)$}, \\
            \xi(t,0)=0, \quad \xi(t,\pi)=0 & \text{for $t\in (0,T)$}, \\
            (\zeta,\xi)(T,x)=(\zeta^T,\xi^T)(x) & \text{for $x\in (0,\pi)$}
        \end{dcases}
    \end{equation}
    with $(\zeta^T,\xi^T)\in H^1(0,\pi)\times L^2(0,\pi)$ and $\int_{0}^{\pi}\zeta^T \, dx=0$. Then if $\xi \equiv 0$ on $(0,T)\times \omega$, we must have $(\zeta,\xi)\equiv (0,0)$ on $(0,T)\times (0,\pi)$. We note that the condition $\xi \equiv 0$ on $(0,T)\times \omega$ can be replaced by $\xi(t,\gamma \pi)=0$ for $0<t<T$, where $\gamma \in (0,1)$ is an arbitrary irrational number. Thus the independence property in Proposition~\ref{prop:smooth_independence}, which is the key point in the proof of Theorem~\ref{thm:local_exact_Lagrangian_controllability}, is essentially a unique continuation property for the linearized adjoint system.
\end{rem}

Proposition~\ref{prop:smooth_independence} is obtained by combining Lemmas~\ref{lem:smoothness} and~\ref{lem:independence}. Then taking $\eta>0$ small enough, Propositions~\ref{prop:smooth_independence} and~\ref{lem:J_is_Gram_det} imply that the Jacobian $J$ is non-zero, which in turn proves Theorem~\ref{thm:local_exact_Lagrangian_controllability} thanks to the inverse function theorem.

\appendix
\renewcommand*{\thesection}{\Alph{section}}

\section{Generalization of a lemma by Rosier and Rouchon}
\label{appendix:Rosier_Rouchon}
We state here a generalization of~\cite[Lemma~1]{RR07}. The purpose of the generalization is to take care of the case when $n_0=2c$ is an integer in the proof of Lemma~\ref{lem:independence}.

\begin{lem}
\label{lem:Rosier_Rouchon_variation}
  Let $(c_n)_{n\geq -1}$ and $(\lambda_n)_{n\geq 0}$ be two sequences of complex numbers with $\lambda_n \neq \lambda_m$ for $n\neq m$. Assume that $\lambda_0$ is real and $(\lambda_n)_{n \geq 1}$ are real for all but finitely many $n\geq 1$; that $\sum_{n\geq -1}|c_n|<\infty$; and that there exists $\Lambda \in \mathbb{R}$ such that $\Re \lambda_n \leq \Lambda$ for all $n\geq 0$. Then if
  \begin{equation}
    (c_{-1}t+c_0)e^{\lambda_0 t}+\sum_{n\geq 1}c_n e^{\lambda_n t}=0 \quad (0<t<T)
  \end{equation}
  for some positive number $T>0$, then $c_n=0$ for all $n\geq -1$.
\end{lem}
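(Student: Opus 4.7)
The plan is to prove the lemma by Laplace transform and residue calculus, adapting the argument of~\cite{RR07} to accommodate the new polynomial term $c_{-1}te^{\lambda_0 t}$, which produces a \emph{double} pole in the Laplace transform.

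First, I would extend the identity $F(t):=(c_{-1}t+c_0)e^{\lambda_0 t}+\sum_{n\geq 1}c_n e^{\lambda_n t}\equiv 0$ from $(0,T)$ to all of $(0,\infty)$. Since $\Re \lambda_n\leq \Lambda$, $\sum_n |c_n|<\infty$, and only finitely many $\lambda_n$ are non-real, the series converges uniformly on compact subsets of $\{\Re t>0\}$ and defines a holomorphic function there, while the prefactor $(c_{-1}t+c_0)e^{\lambda_0 t}$ is entire. Vanishing on the real interval $(0,T)$ therefore forces $F\equiv 0$ on the whole right half-plane by analytic continuation, hence on $(0,\infty)$.

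Next, I would take the Laplace transform. For $\Re s>\Lambda$, Fubini (applied via $\sum_n |c_n|\int_0^\infty e^{(\Re \lambda_n-\Re s)t}\,dt\leq \sum_n |c_n|/(\Re s-\Lambda)<\infty$) gives
\begin{equation}
g(s):=\int_0^\infty F(t)\,e^{-st}\,dt=\frac{c_{-1}}{(s-\lambda_0)^2}+\frac{c_0}{s-\lambda_0}+\sum_{n\geq 1}\frac{c_n}{s-\lambda_n}=0,\quad \Re s>\Lambda.
\end{equation}
Then for any $d>0$, the bound $|c_n/(s-\lambda_n)|\leq |c_n|/d$ on $\mathbb{C}\setminus \bigcup_n \overline{B(\lambda_n,d)}$ together with $\sum|c_n|<\infty$ shows that $g$ extends holomorphically to $\mathbb{C}\setminus \overline{\{\lambda_n\}_{n\geq 0}}$. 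Since only finitely many $\lambda_n$ are non-real, $\overline{\{\lambda_n\}}\subset (-\infty,\Lambda]\cup\{\text{finitely many points}\}$, so its complement in $\mathbb{C}$ is open and connected; the identity theorem then yields $g\equiv 0$ on this connected set.

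Finally, I would extract each coefficient by a contour integral. For each $m\geq 1$ with $\lambda_m$ isolated in $\overline{\{\lambda_n\}}$, a small circle $\Gamma_m$ around $\lambda_m$ that encloses no other $\lambda_n$ gives $c_m=\frac{1}{2\pi i}\oint_{\Gamma_m}g(s)\,ds=0$. For the double pole at $\lambda_0$, the residue formulas $c_0=\frac{1}{2\pi i}\oint_{\Gamma_0}g(s)\,ds$ and $c_{-1}=\frac{1}{2\pi i}\oint_{\Gamma_0}(s-\lambda_0)g(s)\,ds$ simultaneously yield $c_{-1}=c_0=0$. The main delicate point is the analytic-continuation step, which relies crucially on the ``real for all but finitely many'' hypothesis to guarantee that the complement of $\overline{\{\lambda_n\}}$ remains connected and contains $\{\Re s>\Lambda\}$; in the application to Lemma~\ref{lem:independence}, each relevant eigenvalue is isolated in the set of eigenvalues (the only accumulation point is $-c^2$, which is not itself an eigenvalue), so the contour integrals above are unambiguously defined.
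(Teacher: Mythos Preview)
Your Laplace-transform/residue approach is a genuinely different route from the paper's, and the analytic-continuation step is carried out correctly: the uniform bound $|c_n/(s-\lambda_n)|\le |c_n|/d$ on compacta away from $\overline{\{\lambda_n\}}$, together with the connectedness of $\mathbb{C}\setminus\overline{\{\lambda_n\}}$ (a closed subset of $\mathbb{R}$ plus finitely many points never disconnects the plane), does give $g\equiv 0$ there. For comparison, the paper also begins with analytic continuation of $F$ to the right half-plane, but then works directly with $F$: it first eliminates the finitely many non-real $\lambda_n$ by examining the growth of $F(1-it)$ as $t\to\pm\infty$, and then, for the real $\lambda_n$, extracts each coefficient by a Bohr-type averaging of $F(1+it)e^{-i\lambda_N t}$ (and of $tF(1+it)e^{-i\lambda_0 t}$ for $c_{-1}$) over $[-T,T]$ with $T\to\infty$.

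The substantive gap in your proposal is the residue-extraction step. Your contour integral around $\lambda_m$ requires $\lambda_m$ to be \emph{isolated} in $\overline{\{\lambda_n\}}$, and nothing in the hypotheses forces this: the lemma allows, say, the $\lambda_n$ to enumerate the rationals in $[0,1]$, in which case no $\lambda_m$ is isolated and your method recovers no coefficient at all. You are aware of this and fall back on the specific spectral structure of the application, but that leaves the lemma itself unproved. The paper's averaging argument avoids the issue entirely: for fixed $N$ one first discards the tail $\sum_{n>M}|c_n'|<\delta$ using $\sum|c_n|<\infty$, and only then separates $\lambda_N$ from the \emph{finitely many} remaining $\lambda_n$ ($n\le M$, $n\neq N$) via $\frac{1}{2T}\int_{-T}^{T}e^{i(\lambda_n-\lambda_N)t}\,dt\to 0$. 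No isolation in the full sequence is needed. If you want to keep the Laplace-transform framework in full generality, you would need an additional argument to handle accumulation points (for instance, an iterative reduction peeling off isolated points, which in the worst case becomes a transfinite Cantor--Bendixson argument); otherwise, the averaging method is both simpler and strictly more general here.
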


\begin{proof}
  Let
  \begin{equation}
    F(z)=(c_{-1}z+c_0)e^{\lambda_0 z}+\sum_{n\geq 1}c_n e^{\lambda_n z}.
  \end{equation}
  By the assumptions of the lemma, $F$ is analytic on the right half-plane. Since $F(t)=0$ for $0<t<T$, by the unique continuation property for analytic functions, it follows that $F(z)=0$ for every $z$ in the right half-plane. Let $I_+$ be the finite set of $n\geq 1$ such that the imaginary part of $\lambda_n$ is positive. Since $F(1-it)=0$ for $t\in \mathbb{R}$, we have
  \begin{align}
    \label{eq:F(1-it)=0}
    \sum_{n\in I_+}c_n e^{\lambda_n(1-it)}
    & =-\{ c_{-1}(1-it)+c_0 \} e^{\lambda_0(1-it)}-\sum_{n\geq 1;\, n\notin I_+}c_n e^{\lambda_n(1-it)}
  \end{align}
  for $t\in \mathbb{R}$. Now let $\lambda_n=\mu_n+i\nu_n$ ($\mu_n,\nu_n \in \mathbb{R}$) and set $\nu=\max_{n\in I_+}\nu_n$. Then by~\eqref{eq:F(1-it)=0} and $\sum_{n\geq 1}|c_n|<\infty$, it follows that
  \begin{align}
    p(t)
    & \coloneqq \sum_{n\in I_+;\, \nu_n=\nu}c_n e^{\lambda_n}e^{-i\mu_n t} \\
    & =-e^{-\nu t}\sum_{n\in I_+;\, \nu_n<\nu}c_n e^{\lambda_n}e^{-i\mu_n t}e^{\nu_n t} \\
    & \quad-e^{-\nu t}\left[ \{ c_{-1}(1-it)+c_0 \} e^{\lambda_0(1-it)}+\sum_{n\geq 1;\, n\notin I_+}c_n e^{\lambda_n(1-it)} \right]
  \end{align}
  tends to zero as $t\to \infty$.\footnote{Note that $\nu>0$ if $I_+$ is non-empty; if it is empty, then $p(t)=0$.}~This implies that $p(t)=0$ for all $t\in \mathbb{R}$: Since $p(t)$ is an almost periodic function~\cite[Theorem~III]{Bohr47}, we can find, for any $\epsilon>0$, an arbitrarily large $\tau=\tau(\epsilon)$ independent of $t$ such that
  \begin{equation}
    |p(t)-p(t+\tau)|\leq \epsilon \quad (t\in \mathbb{R}).
  \end{equation}
  Hence, by taking $\tau$ sufficiently large, $|p(t)|\leq |p(t+\tau)|+\epsilon \leq 2\epsilon$, which means that $p(t)=0$. Now by taking the Laplace transform of $p(t)$, we get
  \begin{equation}
      \sum_{n\in I_+;\, \nu_n=\nu}\frac{c_n e^{\lambda_n}}{s+i\mu_n}=0 \quad (\Re s>0).
  \end{equation}
  Then it is easy to see that $c_n=0$ for $n\in I_+$ with $\nu_n=\nu$.\footnote{For $n,m\in I_+$ with $\nu_n=\nu_m=\nu$, we have $\mu_n \neq \mu_m$ when $n\neq m$.}~Arguing similarly, it follows that $c_n=0$ for any $n\in I_+$. Considering also the behavior as $t\to -\infty$, we get $c_n=0$ for all $n$ such that the imaginary part of $\lambda_n$ is negative.
  
  From the argument above, we may assume that $(\lambda_n)_{n\geq 0}$ are real for all $n\geq 0$. Now write $c_{-1}'=e^{\lambda_0}c_{-1}$ and $c_{n}'=e^{\lambda_n}c_n$ for $n\geq 0$. By taking the average of the relation $tF(1+it)e^{-i\lambda_0 t}=0$ over the interval $[-T,T]$, we get
  \begin{align}
    ic_{-1}'T^2/3
    & =\frac{1}{2T}\int_{-T}^{T}t\left\{ c_{-1}'(1+it)+c_{0}' \right\} \, dt=-\sum_{n\geq 1}\frac{1}{2T}\int_{-T}^{T}c_{n}'te^{i(\lambda_n-\lambda_0)t}\, dt.
  \end{align}
  By the assumption $\sum_{n\geq -1}|c_n|<\infty$, for any $\delta>0$, there exists a sufficiently large integer $M$ such that
  \begin{align}
      |c_{-1}'|
      & \leq \sum_{1\leq n\leq M}\frac{3}{2T^3}\left| \int_{-T}^{T}c_{n}'te^{i(\lambda_n-\lambda_0)t}\, dt \right|+\delta.
  \end{align}
  Since
  \begin{equation}
      \frac{3}{2T^3}\int_{-T}^{T}te^{i\lambda t}\, dt=\frac{3\cos \lambda T}{i\lambda T^2}-\frac{3\sin \lambda T}{i\lambda^2 T^3}
  \end{equation}
  for $\lambda \in \mathbb{R}\setminus \{ 0 \}$, by taking $T>0$ sufficient large, we get $|c_{-1}'|\leq 2\delta$. Hence $c_{-1}=0$.
  
  The rest of the proof is the same as that of~\cite[Lemma~1]{RR07}; we reproduce it here for the sake of completeness. By taking the average of the relation $F(1+it)e^{-i\lambda_N t}=0$ over the interval $[-T,T]$, we get
  \begin{equation}
      c_{-N}'=-\sum_{n\geq 0;\, n\neq N}\frac{1}{2T}\int_{-T}^{T}c_{n}'e^{i(\lambda_n-\lambda_N)t}\, dt
  \end{equation}
  for any $N\geq 0$. For any $\delta>0$, there exists a sufficiently large integer $M$ such that
  \begin{align}
      |c_{N}'|
      & \leq \sum_{0\leq n\leq M;\, n\neq N}\frac{1}{2T}\left| \int_{-T}^{T}c_{n}'e^{i(\lambda_n-\lambda_N)t}\, dt \right|+\delta.
  \end{align}
  Since
  \begin{equation}
      \frac{1}{2T}\int_{-T}^{T}e^{i\lambda t}\, dt=\frac{\sin \lambda T}{\lambda T}
  \end{equation}
  for $\lambda \in \mathbb{R}\setminus \{ 0 \}$, by taking $T>0$ sufficient large, we get $|c_{N}'|\leq 2\delta$. Hence,  $c_{N}=0$. This completes the proof.
\end{proof}

\section{Linear independence of some trigonometric vectors}
\label{appendix:independence_sin}

In the proof of Lemma~\ref{lem:independence}, we used the fact that $c_1 \sin n\alpha_1+c_2 \sin n\alpha_2=0$ for $n\geq 1$ implies $c_1=c_2=0$. We give a proof of this in the following. In fact, to prove the extension mentioned in Remark~\ref{rem:several_intervals}, we prove a more general lemma:

\begin{lem}
    \label{conj:1}
    Let $0<\alpha_1<\alpha_2<\cdots<\alpha_d<\pi$ and let $c_1,c_2,\dotsc,c_d$ be real numbers. If
    \begin{equation}
        c_1 \sin(n\alpha_1)+c_2 \sin(n\alpha_2)+\cdots+c_d \sin(n\alpha_d)=0
    \end{equation}
    holds for $n=1,2,\dotsc,d$, then $c_1=c_2=\cdots=c_d=0$.
\end{lem}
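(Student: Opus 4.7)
The plan is to prove the stronger statement that the $d \times d$ matrix $M := (\sin(n\alpha_j))_{n,j=1}^{d}$ has nonzero determinant, from which the lemma follows at once by interpreting the hypothesis as $Mc = 0$. My approach is to reduce $\det M$ to a Vandermonde determinant via the classical Chebyshev identity $\sin(n\alpha) = \sin(\alpha)\, U_{n-1}(\cos\alpha)$, where $U_{n-1}$ is the Chebyshev polynomial of the second kind, of degree $n-1$ with leading coefficient $2^{n-1}$.

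First, I would factor $\sin\alpha_j$ out of the $j$-th column, obtaining
\begin{equation}
    \det M = \left(\prod_{j=1}^{d} \sin\alpha_j\right) \det\bigl(U_{n-1}(y_j)\bigr)_{n,j=1}^{d},
\end{equation}
where $y_j := \cos\alpha_j$. Next, because the $n$-th row is a polynomial of exact degree $n-1$ in the column variable $y_j$, standard row operations (subtracting appropriate multiples of rows $1, \ldots, n-1$ from row $n$) reduce $(U_{n-1}(y_j))_{n,j}$ to the Vandermonde matrix $(y_j^{n-1})_{n,j=1}^{d}$, picking up the factor $\prod_{n=1}^{d} 2^{n-1} = 2^{d(d-1)/2}$ coming from the leading coefficients. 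The standard Vandermonde formula then yields
\begin{equation}
    \det M = 2^{d(d-1)/2} \left(\prod_{j=1}^{d} \sin\alpha_j\right) \prod_{1 \le i < j \le d}(\cos\alpha_j - \cos\alpha_i).
\end{equation}

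Finally, since each $\alpha_j \in (0,\pi)$, the factor $\sin\alpha_j$ is strictly positive, and since cosine is strictly decreasing on $(0,\pi)$ and the $\alpha_j$ are pairwise distinct, every factor $\cos\alpha_j - \cos\alpha_i$ is nonzero. Hence $\det M \neq 0$, which forces $c_1 = \cdots = c_d = 0$.

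I do not anticipate a serious obstacle: the whole argument rests on the single substitution $y_j = \cos\alpha_j$ and the Vandermonde determinant, while the hypotheses $0 < \alpha_1 < \cdots < \alpha_d < \pi$ are precisely what is needed to ensure positivity of $\sin\alpha_j$ and injectivity of cosine. The only point meriting mild care is the use of Chebyshev polynomials of the \emph{second} kind rather than of the first kind, which is what allows the $\sin\alpha_j$ factor to be pulled out of each column cleanly as a prefactor.
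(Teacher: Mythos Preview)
Your proposal is correct and follows essentially the same route as the paper: the paper's polynomial $S_i$ satisfying $\sin(i\theta)=S_i(\cos\theta)\sin\theta$ with $\deg S_i=i-1$ and leading coefficient $2^{i-1}$ is precisely the Chebyshev polynomial $U_{i-1}$ you invoke, and both arguments then factor out $\prod_j\sin\alpha_j$, row-reduce to the Vandermonde determinant in $\cos\alpha_j$, and conclude nonvanishing from the strict ordering $0<\alpha_1<\cdots<\alpha_d<\pi$. The paper additionally rewrites $\cos\alpha_j-\cos\alpha_i$ as $-2\sin\bigl(\tfrac{\alpha_i-\alpha_j}{2}\bigr)\sin\bigl(\tfrac{\alpha_i+\alpha_j}{2}\bigr)$ to obtain a closed product formula, but this extra step is not needed for the lemma as stated.
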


To prove this, it suffices to show that
\begin{equation}
    D=D(\alpha_1,\alpha_2,\dotsc,\alpha_d)\coloneqq \det
    \begin{pmatrix}
        \sin \alpha_1  & \cdots & \sin \alpha_i  & \cdots & \sin \alpha_d \\
        \vdots         & \ddots &                &        & \vdots \\
        \sin i\alpha_1 &        & \sin i\alpha_i &        & \sin i\alpha_d \\
        \vdots         &        &                & \ddots & \vdots \\
        \sin d\alpha_1 & \cdots & \sin d\alpha_i & \cdots & \sin d\alpha_d
    \end{pmatrix}
    \neq 0.
\end{equation}
This is a consequence of the following lemma:

\begin{lem}
    \label{conj:2}
    Let $0<\alpha_1<\alpha_2<\cdots<\alpha_d<\pi$. Then the following identity holds:
    \begin{equation}
        D(\alpha_1,\alpha_2,\dotsc,\alpha_d)=2^{d(d-1)}\left( \prod_{i=1}^{d}\sin \alpha_i \right) \prod_{1\leq i<j\leq d}\sin \left( \frac{\alpha_i-\alpha_j}{2} \right) \sin \left( \frac{\alpha_i+\alpha_j}{2} \right).
    \end{equation}
\end{lem}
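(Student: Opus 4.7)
The approach is to reduce $D$ to a classical Vandermonde determinant by means of the Chebyshev polynomials of the second kind $U_{n-1}$. I rely on the identity $\sin(n\alpha) = \sin\alpha \cdot U_{n-1}(\cos\alpha)$, together with the fact that $U_{n-1}$ is a polynomial of degree $n-1$ with leading coefficient $2^{n-1}$. The plan has three steps: factor $\sin\alpha_j$ out of each column, reduce the resulting matrix to a Vandermonde in the variables $\cos\alpha_j$, and then convert the cosine differences into the desired sine products.

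First, I would pull $\sin\alpha_j$ out of the $j$-th column of $(\sin(i\alpha_j))_{i,j=1}^d$ to obtain
$$D = \left(\prod_{j=1}^d \sin\alpha_j\right)\det\bigl(U_{i-1}(\cos\alpha_j)\bigr)_{i,j=1}^d.$$
Next, since each $U_{i-1}$ has leading coefficient $2^{i-1}$, successive row operations---subtracting appropriate linear combinations of the earlier rows from the $i$-th row---eliminate the lower-order Chebyshev contributions without changing the determinant. Extracting the leading coefficient $2^{i-1}$ from the $i$-th row produces a prefactor $\prod_{i=1}^d 2^{i-1} = 2^{d(d-1)/2}$ and leaves behind the classical Vandermonde determinant in $\cos\alpha_1,\ldots,\cos\alpha_d$, yielding
$$\det\bigl(U_{i-1}(\cos\alpha_j)\bigr)_{i,j=1}^d = 2^{d(d-1)/2}\prod_{1\leq i<j\leq d}(\cos\alpha_j-\cos\alpha_i).$$

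Finally, I would apply the product-to-sum identity
$$\cos\alpha_j-\cos\alpha_i = 2\sin\!\left(\tfrac{\alpha_i+\alpha_j}{2}\right)\sin\!\left(\tfrac{\alpha_i-\alpha_j}{2}\right)$$
to every factor in the Vandermonde product. This contributes an additional $2^{d(d-1)/2}$, which combined with the factor from the previous step yields $2^{d(d-1)}$ in total, exactly as claimed.

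The computation is essentially routine, so no deep obstacle is expected; the main care lies in the bookkeeping of the two $2^{d(d-1)/2}$ factors, and in a consistency check on signs: since $\alpha_1,\dots,\alpha_d\in(0,\pi)$, cosine is strictly decreasing on this interval, so both $\cos\alpha_j-\cos\alpha_i$ and $\sin\!\left(\tfrac{\alpha_i-\alpha_j}{2}\right)$ are negative for $i<j$ while $\sin\!\left(\tfrac{\alpha_i+\alpha_j}{2}\right)$ is positive, and no spurious sign arises upon substitution.
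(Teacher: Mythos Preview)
Your proof is correct and follows essentially the same route as the paper: the polynomials $S_i$ the paper constructs via the multiple-angle formula are precisely the Chebyshev polynomials $U_{i-1}$ you invoke, and from there both arguments factor out $\sin\alpha_j$, reduce by row operations to the Vandermonde determinant in $\cos\alpha_j$, and finish with the same product-to-sum identity. The only cosmetic difference is that you name the polynomials explicitly while the paper derives their three needed properties from scratch.
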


\begin{proof}
    Let $1\leq i\leq d$. By the multiple-angle formula
    \begin{equation}
        \label{eq:multiple_angle}
        \sin(i\theta)=\sin \theta \sum_{k=0}^{\lfloor (i-1)/2 \rfloor}(-1)^k \binom{i}{2k+1}(1-\cos^2 \theta)^k \cos^{i-2k-1}\theta
    \end{equation}
    and an elementary identity involving binomial coefficients, it follows that there exists a polynomial $S_i(x)$ that satisfies:
    \begin{enumerate}[label=(\roman*)]
        \item $\sin(i\theta)=S_i(\cos \theta)\sin \theta$,
        \item $\deg S_i(x)=i-1$,
        \item the leading coefficient of $S_i(x)$ is $2^{i-1}$.
    \end{enumerate}
    Now by (i)--(iii) and some elementary row operations,
    \begin{align}
        D
        & =\det
        \begin{pmatrix}
            \sin \alpha_1  & \cdots & \sin \alpha_i  & \cdots & \sin \alpha_d \\
            \vdots         & \ddots &                &        & \vdots \\
            S_i(\cos\alpha_1)\sin \alpha_1 &        & S_i(\cos\alpha_i)\sin \alpha_i &        & S_i(\cos\alpha_d)\sin \alpha_d \\
            \vdots         &        &                & \ddots & \vdots \\
            S_d(\cos\alpha_1)\sin \alpha_1 & \cdots & S_d(\cos\alpha_i)\sin \alpha_i & \cdots & S_d(\cos\alpha_d)\sin \alpha_d
        \end{pmatrix} \\ 
        & = \left( \prod_{i=1}^d \sin \alpha_i \right) \det
        \begin{pmatrix}
            1  & \cdots & 1 & \cdots & 1 \\
            \vdots         & \ddots &                &        & \vdots \\
            S_i(\cos\alpha_1) &        & S_i(\cos\alpha_i) &        & S_i(\cos\alpha_d) \\ 
            \vdots         &        &                & \ddots & \vdots \\
            S_d(\cos\alpha_1) & \cdots & S_d(\cos\alpha_i) & \cdots & S_d(\cos\alpha_d)
        \end{pmatrix} \\
         & =\left( \prod_{i=1}^d \sin \alpha_i \right) \det 
        \begin{pmatrix}
            1  & \cdots & 1 & \cdots & 1 \\
            \vdots         & \ddots &                &        & \vdots \\
            2^{i-1}\cos^{i-1}\alpha_1 &        & 2^{i-1}\cos^{i-1}\alpha_i &        & 2^{i-1}\cos^{i-1}\alpha_d \\
            \vdots         &        &                & \ddots & \vdots \\
            2^{d-1}\cos^{d-1}\alpha_1 & \cdots & 2^{d-1}\cos^{d-1}\alpha_i & \cdots & 2^{d-1}\cos^{d-1}\alpha_d
        \end{pmatrix}.
    \end{align}
    The determinant in the last line is a Vandermonde determinant. So we have
    \begin{align}
        D
        &=\left( \prod_{i=1}^d \sin \alpha_i \right) 2^{d(d-1)/2}\prod_{1\leq i<j\leq d}(\cos\alpha_j-\cos\alpha_i)\\
        &= 2^{d(d-1)}\left( \prod_{i=1}^d \sin \alpha_i \right) \prod_{1\leq i< j\leq n}\sin\left(\frac{\alpha_i-\alpha_j}{2}\right)\sin\left(\frac{\alpha_i+\alpha_j}{2}\right).
    \end{align}
    This proves the lemma.
\end{proof}

\subsubsection*{Acknowledgements}
K. Koike was supported by JSPS Grant-in-Aid for Early-Career Scientists (Grant Number 22K13938) and Grant-in-Aid for Pioneering Research (Organization for Fundamental Research, Tokyo Institute of Technology). F. Sueur was supported by the Bourgeons project, grant ANR-23-CE40-0014-01 of the French National Research Agency (ANR). G. Vergara-Hermosilla was supported by the ANID program BCH 2022 grant No. 4220003. We thank Kazuaki Miyatani for giving a nice proof of Lemma~\ref{conj:2}. We also acknowledge Lorenzo Cavallina for his suggestion to consider the generalization to more than two points (Remark~\ref{rem:several_intervals}). Finally, we are grateful to Sylvain Ervedoza for several helpful discussions.

\subsubsection*{Data availability statement}
Data sharing is not applicable to this article as no datasets were generated or analyzed during the current study.

\subsubsection*{Declarations}
The authors declare that they have no conflicts of interest.

\bibliographystyle{amsplain}
\bibliography{note}

\end{document}